\documentclass[oneside]{amsart}

\usepackage{graphicx} 
\usepackage[utf8]{inputenc} 
\usepackage{xcolor}
\usepackage{amsmath}
\usepackage{geometry}
\usepackage{hyperref} 
\usepackage{cleveref} 
\usepackage{algorithm}
\usepackage{algorithmic}

\crefname{equation}{}{}
\crefname{theorem}{Theorem}{Theorems}
\crefname{lemma}{Lemma}{Lemmas}
\crefname{corollary}{Corollary}{Corollaries}
\crefname{proposition}{Proposition}{Propositions}
\crefname{remark}{Remark}{Remarks}
\crefname{section}{Section}{Sections}
\crefname{figure}{Figure}{Figures}
\crefname{algorithm}{Algorithm}{Algorithms}
\crefname{definition}{Definition}{Definitions}

\usepackage{cite}	

\usepackage{amsfonts}
\usepackage{amssymb}
\usepackage{placeins}
\usepackage{pgfplots}
\pgfplotsset{compat=1.18}

\numberwithin{equation}{section}

\newtheorem{theorem}{Theorem}[section]

\newtheorem{proposition}[theorem]{Proposition}
\theoremstyle{definition}

\theoremstyle{remark}
\newtheorem{remark}{Remark}[section]

\DeclareMathOperator{\rank}{rank}

\DeclareMathOperator{\blockspan}{blockspan}

\makeatletter
\renewcommand{\paragraph}[1]{\medskip\noindent\textit{#1.}\quad}
\makeatother

\newcommand{\R}{\mathbb{R}} 
\newcommand{\C}{\mathbb{C}} 
\newcommand{\T}{\mathbb{T}} 
\renewcommand{\vec}[1]{\boldsymbol{#1}}		
\newcommand{\malpha}{\boldsymbol{\alpha}}
\newcommand{\mrho}{\boldsymbol{\rho}}
\newcommand{\de}{\mathrm{d}} 
\newcommand{\ttilde}[1]{\widetilde{#1}}

\newcommand{\K}{\mathcal{K}} 
\newcommand{\kryl}{\K}
\newcommand{\bkryl}{\kryl^{\square}}

\newcommand{\ebkryl}{\mathcal{E}^{\square}}
\newcommand{\rbkryl}{\mathcal{Q}^{\square}}
\newcommand{\btheta}{\boldsymbol{\theta}}
\newcommand{\bmu}{\boldsymbol{\mu}}

\newcommand{\poly}{\mathbb{P}}
\newcommand{\laur}{\mathbb{L}}
\renewcommand{\b}{\vec{b}}
\newcommand{\B}{\vec{B}}
\newcommand{\X}{\vec{X}}
\newcommand{\Y}{\vec{Y}}
\newcommand{\V}{\vec{V}}
\newcommand{\W}{\vec{W}}

\newcommand{\llangle}{\langle\!\langle}
\newcommand{\rrangle}{\rangle\!\rangle}

\title[Block Krylov subspaces and orthogonal matrix polynomials]{Block Krylov subspaces and orthogonal matrix polynomials: a structural correspondence with applications to unitary matrices}
\author[M. Rinelli]{Michele Rinelli$^*$}
\author[R. Vandebril]{Raf Vandebril$^*$}
\thanks{$^*$Numerical Analysis and Applied Mathematics (NUMA), Department of Computer Science, KU Leuven, Leuven, Belgium. \\
Michele Rinelli: \texttt{michele.rinelli@kuleuven.be},
Raf Vandebril: \texttt{raf.vandebril@kuleuven.be}}

\begin{document}

\begin{abstract}
  We study the connection between block Krylov subspaces and matrix orthogonal functions. 
  Under a no-deflation assumption, we show that polynomial block Krylov subspaces are isometrically isomorphic to spaces of matrix polynomials of bounded degree, providing a unified framework for the analysis and construction of orthonormal bases and recurrence relations. The same correspondence holds for rational block Krylov subspaces and matrix-valued rational functions, and in the extended Krylov setting this leads naturally to Laurent matrix polynomials. 
  When the matrix $A$ is normal, we prove that the induced inner product admits a representation in terms of a discrete spectral matrix measure, extending a classical result for Hermitian matrices. 
  In the unitary case, where the measure is supported on the unit circle, this connection allows us to transfer the Szeg\H{o} recurrence for orthogonal matrix polynomials and the CMV framework for Laurent matrix polynomials to the block Krylov setting, yielding efficient procedures for the orthogonalization of polynomial and extended block Krylov subspaces.
\end{abstract}

\maketitle

\section{Introduction}

The interplay between Krylov subspace methods and orthogonal polynomials is classical in numerical linear algebra. For a Hermitian matrix $A$ and a starting vector $\b$, the Lanczos process constructs an orthonormal basis of the Krylov subspace $\kryl_m(A,\b)$ through a three-term recurrence, and the corresponding projected matrix is tridiagonal. On the other hand, orthogonal polynomials on the real line, defined with respect to an integral inner product, satisfy a three-term recurrence governed by a Jacobi matrix. This analogy is not merely formal: the Lanczos vectors may be interpreted as orthogonal polynomials in~$A$ acting on the starting vector, a viewpoint that lies behind the classical links with moments, quadrature, and error estimates; see Meurant~\cite{MeurantLanczos06}, and the books by Golub and Meurant~\cite{GoMe10}, and Liesen and Strakos~\cite{LieStraKrylov}.

A closely related picture emerges for unitary matrices. In this setting, the Arnoldi process admits short recurrences that reflect the unitary structure of the matrix. In particular, Gragg's isometric Arnoldi algorithm~\cite{Gragg93isometric} generates an orthonormal Krylov basis in exact analogy with the Szeg\H{o} recurrence for orthogonal polynomials on the unit circle~\cite{SzegoBook75}. This parallel was made especially transparent in the survey by Watkins~\cite{Watkins93}, where the Hermitian/real-line and unitary/unit-circle settings are placed side by side within a common eigenvalue-oriented perspective.

 Block variants of Krylov methods arise naturally in several applications, such as linear systems with multiple right-hand sides~\cite{OLeary80,Simoncini95}, the matrix-vector multiplication of matrix functions on several vectors~\cite{FrLuSz17,FrLuSz20}, Lyapunov and Sylvester equations with low-rank right-hand sides~\cite{blockKrylovSylvester00}, and model order reduction for multi-input multi-output systems~\cite{Antoulas05,ZimDruSim25}. In many of these settings one may also run independent single-vector Krylov methods. The main computational appeal of block methods is that products of the form $A[\vec x_1,\dots,\vec x_s]$ can be performed in a single matrix-block multiplication, which is typically preferable on modern architectures to computing $A\vec{x}_1,\dots,A\vec{x}_s$ separately; see Carson, Lund and Rozlo\v{z}n\'ik~\cite{CarsonLundetal22}, and the thesis of Lund~\cite{LundPhD}.  
 For eigenvalue computations, block methods also offer a concrete advantage over independent single-vector runs, since they can capture several directions associated with multiple or clustered eigenvalues~\cite{Parlett98}.

 Whereas orthogonal polynomials play a prominent role in numerical linear algebra, orthogonal matrix polynomials have so far received much less attention. On the theoretical side, however, their structure is well developed: for matrix-valued measures on the real line one obtains orthogonal matrix polynomials satisfying matrix three-term recurrence relations, while for matrix-valued measures on the unit circle one obtains matrix Szeg\H{o} recurrences in terms of matrix Verblunsky coefficients; see, for instance, Sinap and Van Assche~\cite{SinVanAs94}, and the survey by Damanik, Pushnitski, and Simon~\cite{DaPuSi08}. In the unit-circle setting one may also work with orthogonal Laurent matrix polynomials and the associated CMV basis, whose multiplication operator is represented by a sparse block CMV matrix~\cite{SimonCMV07}.

In this paper we present a unified framework linking block Krylov subspaces and orthogonal matrix polynomials. This connection is already mentioned by Golub and Meurant~\cite{GoMe10} and Lund~\cite{LundPhD} in the context of symmetric block Gauss quadrature. Here we develop it in a general setting. Under a no-deflation assumption, we show that block Krylov subspaces are naturally identified with spaces of matrix polynomials endowed with a suitable matrix-valued inner product. We then extend the same perspective to block rational Krylov subspaces and orthogonal matrix-valued rational functions; in the extended block Krylov setting, these reduce to Laurent matrix polynomials. The case of deflating subspaces, treated in detail by Gutknecht and Schmelzer~\cite{BlockGradeGutknecht09}, is also discussed in connection with rectangular matrix polynomials. 

To illustrate the strength of this framework, we specialize to the unitary case. The Szeg\H{o} recurrence for orthogonal matrix polynomials on the unit circle translates into short recurrences for the orthogonal block vectors spanning the block Krylov subspace, yielding a block analogue of the isometric Arnoldi process. We also exhibit the structured rank of the projected matrix through a Schur parametrization that depends on the recurrence parameters. Likewise, the CMV framework, based on orthogonal Laurent matrix polynomials and a sparse recurrence matrix, leads to an efficient orthogonalization procedure for extended block Krylov subspaces.

The paper is organized as follows. In~\cref{sec:preliminaries} we revisit the classical scalar link between Krylov subspaces and orthogonal polynomials and collect the preliminaries on block Krylov subspaces and orthogonal matrix polynomials. In~\cref{sec:block-krylov-orthogonal-matrix-polynomials-correspondence} we establish the correspondence between block Krylov subspaces and orthogonal matrix polynomials, showing in particular that, for normal matrices, the induced matrix-valued inner product is associated with a spectral measure supported on $\sigma(A)$. We then extend the construction to the rational setting and, in particular, to extended block Krylov subspaces and Laurent matrix polynomials. In~\cref{sec:unitary-block-Krylov} we exploit the unitary case and the theory of orthogonal matrix polynomials on the unit circle to derive the block isometric Arnoldi algorithm. In~\cref{sec:unitary-extended-block-krylov-CMV} we use the sparsity pattern of the block CMV matrix to obtain an efficient orthogonalization procedure for extended block Krylov subspaces. The numerical experiments in~\cref{sec:numerical-experiments} assess the efficiency and accuracy of the methods.

\section{Preliminaries}
\label{sec:preliminaries}
This section recalls standard material on Krylov subspaces, block Krylov methods, and orthogonal matrix polynomials. 

\subsection{Notation}
Block vectors in $\C^{n\times s}$ are denoted by capital bold letters such as $\X$. Single brackets $\langle\cdot,\cdot\rangle$ are used to denote scalar inner products, while double brackets $\llangle\cdot,\cdot\rrangle$ denote matrix-valued inner products. We use $\delta_{i,j}$ for the Kronecker delta. The identity matrix of size $s\times s$ is denoted by $I_s$, and simply by $I$ when its size is clear from the context. Likewise, the zero matrix is denoted by $\boldsymbol{0}$ when its size is clear, and by $\vec 0_{m,n}$ when we wish to specify the size $m\times n$. The conjugate transpose of a matrix $A=(a_{i,j})_{i,j}$ is denoted by $A^*=(\overline{a}_{j,i})_{i,j}$. Finally, $\|A\|_2$ and $\|A\|_F$ denote the spectral norm and the Frobenius norm, respectively.

\subsection{Classical link between Krylov subspaces and orthogonal polynomials}

Let $A\in\C^{n\times n}$ and $\vec b\in\C^n$.
The Krylov subspace of order $m$ associated with $(A,\vec b)$ is defined as
\begin{equation*}
\kryl_{m}(A,\vec b)
:= \operatorname{span}\{\vec b, A\vec b, \dots, A^{m-1}\vec b\}.
\end{equation*}
Equivalently, every $\vec v\in\mathcal K_{m}(A,\vec b)$ can be written as
$\vec v=p(A)\vec b$, where $p$ is a scalar polynomial with $\deg p<m$.

Since the vectors $\vec b, A\vec b,\dots,A^{m-1}\vec b$ typically form an
ill-conditioned basis, many Krylov methods construct an orthonormal basis of
$\mathcal K_{m}(A,\vec b)$ with respect to the Euclidean inner product
$\langle \vec v,\vec w\rangle=\vec v^*\vec w$.
The most common procedure is the Arnoldi algorithm, which generates
orthonormal vectors $\vec v_1,\dots,\vec v_{m}$ satisfying the recurrence
\begin{equation*}
A\vec v_k = \sum_{j=1}^{k+1} h_{j,k}\vec v_j,
\qquad
h_{j,k}=\vec v_j^*A\vec v_k.
\end{equation*}
By setting $V_{m}=[\vec v_1,\dots,\vec v_{m}]$ and denoting by
$H_{m}$ and $\underline{H}_{m}$ the associated upper Hessenberg
matrices, this recurrence can be equivalently written as
\begin{equation*}
A V_{m}
= V_{m} H_{m}
+ h_{m+1,m}\vec v_{m+1}\vec e_{m}^*
= V_{m+1}\underline{H}_{m}.
\end{equation*}

The pair $(A,\vec b)$ induces a polynomial inner product defined by
\begin{equation}
  \label{eq:poly-inner-product-scalar}
\langle p,q\rangle_{A,\vec b}
:= \vec b^* p(A)^* q(A)\vec b.
\end{equation}
When restricted to the space of polynomials of degree at most $m-1$, which we denote by $\poly_{m-1}$, the inner product \cref{eq:poly-inner-product-scalar} is nondegenerate if and only if
$\dim \mathcal K_{m}(A,\vec b)=m$, which is equivalent to the absence of
breakdown in the Arnoldi process up to step $m$.
Under this assumption, the mapping
$p\mapsto p(A)\vec b$ defines an isometry between
$\poly_{m-1}$ and $\mathcal K_{m}(A,\vec b)$.
As a consequence, orthonormal Krylov bases correspond to families of orthogonal
polynomials with respect to $\langle\cdot,\cdot\rangle_{A,\vec b}$, and the
Arnoldi recurrence contains the associated polynomial recurrence.

If $A$ is normal and
$A=\sum_{j=1}^n \lambda_j \vec u_j \vec u_j^*$ is an orthonormal eigendecomposition,
the inner product $\langle\cdot,\cdot\rangle_{A,\vec b}$ admits the discrete
representation
\begin{equation}
  \label{eq:discrete-inner-product-scalar}
\langle p,q\rangle_{A,\vec b}
= \sum_{j=1}^n \overline{p(\lambda_j)}\,q(\lambda_j)\,
|\vec u_j^*\vec b|^2,
\end{equation}
that is, a discrete inner product for polynomials supported on the spectrum of $A$. A proof of \cref{eq:discrete-inner-product-scalar} can be found in \cite[Theorem 4.2]{GoMe10} for the real symmetric case, but it extends trivially to any normal matrix, since it only relies on a spectral decomposition of $A$.
In this case, the Hessenberg matrix generated by Arnoldi (or Lanczos, in the
Hermitian setting) represents the recurrence satisfied by the corresponding
orthonormal polynomials.

The discrete representation above admits a natural measure-theoretic
interpretation.
Indeed, we can rewrite the inner product as
\begin{equation*}
\langle p,q\rangle_{A,\vec b}
=
\int_{\sigma(A)}
\overline{p(z)}\,q(z)\,\mathrm d\mu(z),
\end{equation*}
where $\mu$ is the discrete measure
\begin{equation*}
\mu = \sum_{j=1}^n |w_j|^2 \,\delta_{\lambda_j},\quad w_j = \vec u_j^*\vec b,
\end{equation*}
where $\delta_{\lambda}$ is the Dirac delta measure concentrated at $\lambda$.
We call $\mu$ the \emph{spectral measure} of $(A,\vec b)$.

In the Hermitian case, spectral measures
play a central role in the theory of Lanczos method and Gaussian quadrature, since they provide the link between tridiagonal Lanczos recurrences and orthogonal
polynomials on the real line.
This viewpoint is classical in numerical linear algebra and approximation
theory, and appears in the analysis of moment problems, Gauss-type quadrature
rules, and matrix function approximation; see, e.g.,~\cite{GoMe10}.
In this paper, we extend this perspective to the block and
non-Hermitian normal settings, where the induced measure becomes
matrix-valued.

\subsection{Block Krylov subspaces}

Let $A\in\C^{n\times n}$ and let $\vec B\in\C^{n\times s}$ be a block vector.
The (classical) block Krylov subspace of order
$m$ and block size $s$ associated with $(A,\vec B)$ is
\begin{equation*}
\bkryl_{m}(A,\vec B)
:=
\left\{
\sum_{k=0}^{m-1} A^k \vec B C_k
:\;
C_k\in\C^{s\times s}
\right\}.
\end{equation*}
The sequence $\B, A\B,\dots,A^{m-1}\B$ does not span in the usual vector-space sense: elements of $\bkryl_m(A,\B)$ are formed using $s\times s$ matrix coefficients. To make this explicit, we use the notation
\begin{equation*}
  \blockspan\{\X_1,\dots,\X_{m}\} = \left\{  \sum_{k=1}^{m} \X_k C_k: C_k\in\C^{s\times s}\right\},
\end{equation*}
where $\X_1,\dots,\X_{m}$ are $n\times s$ block vectors. Thus, we can write
\begin{equation*}
  \bkryl_{m}(A,\B) = \blockspan\{ \B,A\B,\dots,A^{m-1}\B \}.
\end{equation*}

Block Krylov methods construct bases of $\bkryl_{m}(A,\vec B)$ that are
orthonormal with respect to the matrix-valued Euclidean inner product
\begin{equation*}
\llangle \vec X,\vec Y\rrangle := \vec X^*\vec Y.
\end{equation*}
A sequence $\vec V_1,\dots,\vec V_{m}$ is orthonormal if
\begin{equation*}
\vec V_i^*\vec V_j = \delta_{i,j} I_{s},
\qquad i,j=1,\dots,m,
\end{equation*}
and forms a basis if
\begin{equation*}
\bkryl_{m}(A,\vec B)
=
\blockspan\{ \V_1,\dots,\V_{m} \}.
\end{equation*}
Additionally, we say that the basis is \emph{nested} if 
\begin{equation*}
  \bkryl_k(A,\B) = \blockspan\{ \V_1,\dots,\V_k \},\quad \text{for all $k=1,\dots,m$.}
\end{equation*}
Such a basis can be generated by the block Arnoldi algorithm, which we report in~\cref{alg:block-arnoldi}; for more details see, e.g.,~\cite{FrLuSz17,Simoncini95}.
\begin{algorithm}[ht]
\caption{Block Arnoldi algorithm}
\label{alg:block-arnoldi}
\begin{algorithmic}[1]
\REQUIRE Matrix $A\in\C^{n\times n}$, starting block vector $\B\in\C^{n\times s}$ such that $\B^*\B=I_{s}$, number of steps $m$
\ENSURE Nested orthonormal basis $\vec V_1,\dots,\vec V_{m}$ of $\bkryl_{m}(A,\B)$, and recurrence coefficients $H_{i,j}$

\STATE \textbf{Initialize.} Set $\V_1 = \B$

\FOR{$k=1,2,\dots,m-1$}

    \STATE \textbf{Multiplication with $A$.} Compute $\vec X_{k+1} = A\V_k$
    \STATE \textbf{Recurrence coefficients.} Compute $H_{h,k} = \V_h^* \X_{k+1}$, $h=1,\dots,k$
    \STATE Compute $\W_{k+1} = \X_{k+1} - \sum_{h=1}^k \V_kH_{h,k}$
    \IF{$\rank(\W_{k+1}) < s$}
      \STATE Breakdown if $\rank(\W_k)=0$, deflation if $0<\rank(\W_{k+1})<s$\label{line:breakdown-deflation-block-arnoldi}
    \ELSE 
      \STATE Construct $\V_{k+1}$, $H_{k+1,k}$ from a thin QR factorization $\W_{k+1} = \V_{k+1} H_{k+1,k}$
    \ENDIF
\ENDFOR
\end{algorithmic}
\end{algorithm}

In the absence of breakdown or deflation in~\cref{alg:block-arnoldi}, the basis vectors satisfy the recurrence relation
\begin{equation*}
  \label{eq:block-Arnoldi-recurrence}
A\vec V_k
=
\sum_{h=1}^{k+1} \vec V_{h} H_{h,k},
\qquad k=1,\dots,m-1.
\end{equation*}
Equivalently, 
\begin{equation}
  \label{eq:recurrence-block-arnoldi}
A\mathcal V_{m-1}
=
\mathcal V_{m-1}\,\mathcal H_{m-1} + \V_{m}\vec E_{m-1}^TH_{m,m-1},
\end{equation}
where $\mathcal V_{m-1}=[\vec V_1,\dots,\vec V_{m-1}]$, $\vec E_{m-1}^T =[\vec{0},\dots,\vec{0},I_{s}]$,  
and $\mathcal H_{m-1}$ is block upper Hessenberg with $H_{h,k}$ as blocks. When $A$ is Hermitian,~\cref{alg:block-arnoldi} reduces to the block Lanczos algorithm, so the recurrence relation \cref{eq:block-Arnoldi-recurrence} includes only three terms, and the recurrence matrix $\mathcal{H}_{m-1}$ becomes block tridiagonal.

The following proposition characterizes the occurrence of breakdown in the block Arnoldi process; see~\cite{BlockGradeGutknecht09} for further details.
\begin{proposition}
  \label{prop:no-deflation-block-krylov}
  Let $\bkryl_m(A,\B)$ be the block Krylov subspace associated with $A\in\C^{n\times n}$ and $\B\in\C^{n\times s}$. The following statements are equivalent.
  \begin{enumerate}
    \item The block Arnoldi algorithm does not break down before step $m$.
    \item The space $\bkryl_m(A,\B)$ admits an orthonormal basis of block vectors.
    \item The dimension of $\bkryl_m(A,\B)$ as a $\C$-vector space is $s^2 m$.
  \end{enumerate}
\end{proposition}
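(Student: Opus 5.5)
We prove the cycle (1)$\Rightarrow$(2)$\Rightarrow$(3)$\Rightarrow$(1), after recording one structural fact. Consider the $\C$-linear map
\[
\Phi:(\C^{s\times s})^m\to \bkryl_m(A,\B),\qquad (C_0,\dots,C_{m-1})\mapsto \sum_{k=0}^{m-1}A^k\B C_k .
\]
It is surjective by definition, so $\dim_\C\bkryl_m(A,\B)\le s^2m$. Writing $\mathcal B_m=[\B,A\B,\dots,A^{m-1}\B]\in\C^{n\times sm}$, we have $\Phi(C)=\mathcal B_m\mathcal C$, where $\mathcal C$ stacks the $C_k$. Hence $\Phi$ acts column by column, and $\bkryl_m(A,\B)=\{\mathcal B_m\mathcal C\}\cong(\operatorname{range}\mathcal B_m)^s$. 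This gives
\[
\dim_\C\bkryl_m(A,\B)=s\cdot\rank\mathcal B_m ,
\]
so (3) is equivalent to $\rank\mathcal B_m=sm$, i.e.\ to the $sm$ columns of $\B,\dots,A^{m-1}\B$ being linearly independent. We use this reformulation throughout.

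\textbf{(1)$\Rightarrow$(2).} If there is no breakdown or deflation, each thin QR factorization has $H_{k+1,k}$ invertible. By induction on $k$ this gives $\blockspan\{\V_1,\dots,\V_k\}=\bkryl_k(A,\B)$, with orthonormality built into the construction. Here we assume $\B$ has been normalized to $\B^*\B=I$ as the algorithm requires; this is possible when $\B$ has full column rank, which is itself implied by each of (1)--(3).

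\textbf{(2)$\Rightarrow$(3).} Let $\V_1,\dots,\V_m$ be an orthonormal basis, so $\bkryl_m=\{\mathcal V\mathcal C\}$ with $\mathcal V=[\V_1,\dots,\V_m]$. Then $\mathcal V^*\mathcal V=I_{sm}$, so $\mathcal V$ has rank $sm$, and the same column-wise count gives $\dim_\C=s\cdot sm$.

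\textbf{(3)$\Rightarrow$(1).} This is the step requiring care. If the $sm$ columns of $\mathcal B_m$ are independent, then at step $k$ we have $\X_{k+1}=A\V_k$, where inductively $\V_k=A^{k-1}\B R+(\text{lower powers})$ with $R$ invertible. Therefore
\[
\W_{k+1}=(I-P_k)A^{k}\B R ,
\]
where $P_k$ is the orthogonal projector onto $\operatorname{range}\mathcal B_k$. If $\W_{k+1}$ were rank deficient, some nonzero $\vec c$ would give $A^k\B R\vec c\in\operatorname{range}\mathcal B_k$, contradicting the independence of the columns of $\mathcal B_{k+1}$, which follows from that of $\mathcal B_m$. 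So $H_{k+1,k}$ is invertible and the leading-coefficient induction continues. The main obstacle is tracking this leading coefficient precisely, including the initial normalization of $\B$.
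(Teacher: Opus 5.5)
The paper does not prove this proposition itself; it only refers to Gutknecht and Schmelzer. Your argument is therefore a self-contained substitute, and it is correct.

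Your central reduction is $\dim_{\C}\bkryl_m(A,\B)=s\cdot\rank \mathcal{B}_m$ with $\mathcal{B}_m=[\B,A\B,\dots,A^{m-1}\B]$. Equivalently, $\bkryl_m(A,\B)$ is the $s$-fold Cartesian product of the column space of $\mathcal{B}_m$, which is essentially the viewpoint of the cited block-grade paper. What your write-up adds is an elementary, checkable criterion: all three conditions become linear independence of the $sm$ columns of $\mathcal{B}_m$. In (3)$\Rightarrow$(1), the identity $\W_{k+1}=(I-P_k)A^k\B R$ shows that a rank drop of $\W_{k+1}$ is literally a column dependency in $\mathcal{B}_{k+1}$. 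The reduction also makes transparent why $s^2m$ is the right count, which is exactly what the paper relies on in the proof of \cref{thm:equivalence-nodeflation-nondegenerate-isomorphism}. Citing the literature instead places the statement inside a theory that also describes what happens after a deflation.

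Two minor remarks. First, you rightly read (1) as excluding both breakdown ($\W_{k+1}=\vec 0$) and deflation ($0<\rank \W_{k+1}<s$). With the terminology of \cref{alg:block-arnoldi} this reading is necessary. A deflation at some step $k\le m-1$ is not a breakdown, yet it already gives a dependency among the columns of $\mathcal{B}_{k+1}$, hence $\dim_{\C}\bkryl_m(A,\B)<s^2m$.

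Second, in (1)$\Rightarrow$(2) the inclusion $\bkryl_{k+1}(A,\B)\subseteq\blockspan\{\V_1,\dots,\V_{k+1}\}$ deserves one explicit line rather than just ``by induction''. You can get it from the leading-coefficient bookkeeping you already do in (3)$\Rightarrow$(1). Alternatively, combine the easy inclusion $\blockspan\{\V_1,\dots,\V_{k+1}\}\subseteq\bkryl_{k+1}(A,\B)$ with the count $s^2(k+1)=\dim_{\C}\blockspan\{\V_1,\dots,\V_{k+1}\}\le\dim_{\C}\bkryl_{k+1}(A,\B)\le s^2(k+1)$.
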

If the equivalent conditions of~\cref{prop:no-deflation-block-krylov} are satisfied, we say that the block Krylov subspace $\bkryl_m(A,\B)$ is \emph{nondeflating}.

If $\W_{k+1}= \vec 0$ at step $k$, then~\cref{alg:block-arnoldi} breaks down. In this case,
\begin{equation}
  \label{eq:stationary-block-krylov-subspaces}
  \bkryl_k(A,\B) = \bkryl_{k+1}(A,\B) = \bkryl_j(A,\B),
\end{equation}
for all $j\geq k+1$. This corresponds to a \emph{lucky breakdown}, meaning that all the information contained in the higher-order block Krylov subspaces is already captured by $\bkryl_k(A,\B)$. For example, in the context of linear systems $A\vec X=\vec \B$, a breakdown implies that the exact solution belongs to $\bkryl_{k}(A,\B)$.

If, on the other hand, $\W_{k+1}\neq 0$ but $\rank(\W_{k+1})<s$, then~\cref{eq:stationary-block-krylov-subspaces} no longer holds, and the construction can proceed after a deflation. More precisely, the thin QR factorization of $\W_{k+1}$ takes the form
\[
  \W_{k+1} = \V_{k+1} H_{k+1,k},
\]
where $\V_{k+1}\in\C^{n\times r}$ is a reduced block vector, $H_{k+1,k}\in\C^{r\times s}$ is upper triangular, and $r=\rank(\W_{k+1})<s$.

The absence of deflation in $\bkryl_m(A,\B)$ will be essential in establishing the connection with orthogonal matrix polynomials. Nevertheless, we will later discuss possible ways to handle the deflated case.

\subsection{Action of matrix polynomials on block vectors}
In the vector case, polynomials play an important role for the description and analysis of Krylov subspace methods. In the block case, the analogous role is taken by matrix polynomials. 

A matrix polynomial of degree at most $m-1$ is an expression
\begin{equation*}
P(z)=\sum_{k=0}^{m-1} z^k C_k,
\qquad C_k\in\C^{s\times s}.
\end{equation*}
Its action on $\vec B$ through $A$ is defined by
\begin{equation}
  \label{eq:matrix-polynomial-action}
P(A)\circ\vec B
=
\sum_{k=0}^{m-1} A^k \vec B C_k.
\end{equation}
With this notation,
\begin{equation}
  \label{eq:block-krylov-subspace-definition-matrix-polynomials}
\bkryl_{m}(A,\vec B)
=
\{\, P(A)\circ\vec B : \deg P \leq m-1 \,\}.
\end{equation}
Thus, block Krylov subspaces are precisely the images of the action of matrix
polynomials evaluated at $A$ and applied to $\vec B$.

\subsection{Orthogonal matrix polynomials}

We briefly recall standard notions on orthogonal matrix polynomials (OMP) with
respect to matrix-valued measures; for comprehensive treatments we refer to,
e.g.,~\cite{DaPuSi08,SimonCMV07}.

Let $\poly^s$ denote the set of all $s\times s$ matrix polynomials, and let $\poly^s_d$ denote the subset of matrix polynomials in $\poly^s$ with degree at most $d$. Let $\bmu$ be a $\C^{s\times s}$-valued Hermitian positive semidefinite
measure supported on a contour $\Gamma\subset\C$.
Such a measure induces a \emph{right matrix-valued inner product} on matrix
polynomials via
\begin{equation*}
\llangle P,Q\rrangle_{\bmu} := \int_\Gamma P(z)^*\,\mathrm d\bmu(z)\,Q(z).
\end{equation*}
We assume that the integral is well-defined for all $P,Q\in\poly^s$.

We say that the inner product $\llangle\cdot,\cdot\rrangle_{\bmu}$ is nondegenerate if, for any monic matrix polynomial $P(z)$,\footnote{A matrix polynomial is monic if the leading coefficient is the identity matrix.} $\llangle P,P\rrangle_{\bmu}$ is positive definite. Equivalently, this means that 
\begin{equation*}
  \llangle P,P\rrangle_{\bmu} \neq 0\quad\text{for all $P\in\poly^s$, $P(z)\not\equiv 0$}.
\end{equation*}
Under this assumption, there exists
an infinite sequence $\{P_k\}_{k\ge 0}$ of \emph{orthonormal matrix polynomials}, such that $\deg P_k = k$ and
\begin{equation*}
\llangle P_i,P_j\rrangle_{\bmu} = \delta_{i,j}\,I_{s}.
\end{equation*}
If the inner product $\llangle\cdot,\cdot\rrangle_{\bmu}$ is nondegenerate only when restricted to $\poly^s_d$, i.e.,
\begin{equation*}
  \llangle P,P\rrangle_{\bmu} \neq 0\quad\text{for all $P\in\poly^s_d$, $P(z)\not\equiv 0$,}
\end{equation*}
then the existence of a finite sequence $P_0,\dots,P_d$ of orthonormal matrix polynomials is still guaranteed. We will still use the notation $\{ P_k \}_{k\geq 0}$ to denote such a sequence.

As in the scalar case, the multiplication by $z$ maps
$\poly^s_k$ into $\poly^s_{k+1}$, which leads to finite-term recurrence relations for $\{P_k\}_{k\geq 0}$:
\begin{equation}
  \label{eq:matrix-poly-recurrence-generic}
  zP_{k-1}(z) = \sum_{h=0}^{k} P_h(z)H_{h+1,k},\quad H_{h+1,k} \in \C^{s\times s}.
\end{equation}
When $\Gamma\subset\R$, these recurrences take the form of a three-term
relation represented by a block Jacobi matrix.
When $\Gamma\subset\T$, where $\T$ denotes the complex unit circle, one obtains a Szeg\H{o}-type recursion governed by
matrix-valued Verblunsky coefficients.
We will return in more detail to the unit circle case in \cref{sec:unitary-block-Krylov}, where
the theory will be related to block Krylov subspaces associated with unitary matrices.

\begin{remark}
  \label{rem:discrete-inner-product-degenerate}
 Let $\bmu$ be supported on a finite set $\{ z_1,\dots,z_N \}$, so
  \begin{equation}
    \label{eq:discrete-inner-product-matrix-polynomials}
    \llangle P,Q \rrangle_{\bmu} = \sum_{j=1}^N P(z_j)^* \bmu({z_j}) Q(z_j).
  \end{equation}
  In this case, the inner product is necessarily degenerate. In fact, $\llangle P,P\rrangle_{\bmu} = \vec 0$ whenever $P(z_j) = \vec 0$ for all $j=1,\dots,N$. However, $\llangle\cdot,\cdot\rrangle_{\bmu}$ can still be nondegenerate when restricted to $\poly^s_d$, for some $d\leq N$, and this guarantees the existence of a finite sequence of orthonormal matrix polynomials. This will be the case for inner products linked to nondeflating block Krylov subspaces.
\end{remark}

  Because of noncommutativity, one must distinguish between left and right orthogonality and the corresponding orthogonal polynomials. In fact, one can define the \emph{left matrix-valued inner product}
  \begin{equation*}
    \llangle P,Q\rrangle^L_{\bmu} = \int_{\Gamma}P(z)\,\mathrm d\bmu(z)\,Q(z)^*,
  \end{equation*}
  which leads to a different sequence of \emph{left orthonormal matrix polynomials} $\{ P^L_k \}_{k\geq 0}$. In~\cref{sec:block-krylov-orthogonal-matrix-polynomials-correspondence}, we will see that orthonormal bases of block Krylov subspaces are linked with right orthonormal polynomials. However, for measures supported on the unit circle, there is a special interaction between right and left polynomials that leads to the Szeg\H{o} recurrence. We will return to this in~\cref{sec:unitary-block-Krylov}.

\section{Block Krylov--orthogonal matrix polynomials correspondence}
\label{sec:block-krylov-orthogonal-matrix-polynomials-correspondence}

In this section we establish the connection between block Krylov subspaces and
orthogonal matrix polynomials through the matrix-valued spectral measure
induced by a pair $(A,\vec B)$.

\subsection{Isometry between block Krylov subspaces and matrix polynomials}

The pair $(A,\B)$, with $A\in\C^{n\times n}$ and $\vec B\in\C^{n\times s}$, defines an inner product on
matrix polynomials:
\begin{equation}
  \label{eq:matrix-poly-inner-product-generic}
\llangle P,Q\rrangle_{A,\vec B}
=
(P(A)\circ\vec B)^*\, Q(A)\circ\vec B.
\end{equation}

\begin{theorem}
  \label{thm:correspondence-basis-orthogonal-matrix-polynomials}
  Let $A\in\C^{n\times n}$ and $\B\in\C^{n\times s}$.
  \begin{enumerate}
    \item If $\V_1,\dots,\V_{m}$ is a nested orthonormal basis of $\bkryl_m(A,\B)$, then $\V_k = P_{k-1}(A)\circ\B$, $k=1,\dots,m$, where $P_0,\dots,P_{m-1}$ are orthonormal matrix polynomials with respect to $\llangle\cdot,\cdot\rrangle_{A,\B}$.
    \item If $P_0(z),\dots,P_{m-1}(z)$ are orthonormal matrix polynomials with respect to $\llangle\cdot,\cdot\rrangle_{A,\B}$, then the block vectors $\V_k=P_{k-1}(A)\circ\B$, $k=1,\dots,m$, form a nested orthonormal basis of $\bkryl_{m}(A,\B)$.
  \end{enumerate}
\end{theorem}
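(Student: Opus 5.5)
The plan is to work entirely through the evaluation map $\Phi:\poly^s_{m-1}\to\bkryl_m(A,\B)$, $\Phi(P)=P(A)\circ\B$. Two elementary properties of the action \cref{eq:matrix-polynomial-action} do most of the work. First, $\Phi$ is right $\C^{s\times s}$-linear: $(PC+QD)(A)\circ\B=(P(A)\circ\B)C+(Q(A)\circ\B)D$. Second, by \cref{eq:block-krylov-subspace-definition-matrix-polynomials}, $\Phi$ maps $\poly^s_{k-1}$ onto $\bkryl_k(A,\B)$ for every $k$. By definition \cref{eq:matrix-poly-inner-product-generic} we then have $\llangle P,Q\rrangle_{A,\B}=\llangle \Phi(P),\Phi(Q)\rrangle$. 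So $\Phi$ is an isometry for the matrix-valued inner products, and orthonormality transfers in both directions.

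For (1), I would argue by induction on $k$. Since $\V_k\in\bkryl_k(A,\B)$, surjectivity gives some $P_{k-1}\in\poly^s_{k-1}$ with $\V_k=P_{k-1}(A)\circ\B$. The isometry then yields $\llangle P_{i},P_{j}\rrangle_{A,\B}=\V_{i+1}^*\V_{j+1}=\delta_{i,j}I_s$. The one real issue is that "orthonormal matrix polynomials" also requires $\deg P_{k-1}=k-1$, and $\Phi$ need not be injective a priori, so a preimage could a priori have the wrong degree. To handle this, I would first establish nondegeneracy. By \cref{prop:no-deflation-block-krylov}, the existence of an orthonormal basis gives $\dim_\C\bkryl_m(A,\B)=s^2m=\dim_\C\poly^s_{m-1}$, so $\Phi$ is a bijection. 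Applying the same argument to each nested level $k$ shows that $\Phi$ restricted to $\poly^s_{k-1}$ is a bijection onto $\bkryl_k(A,\B)$.

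Now suppose $\deg P_{k-1}<k-1$. Then $\V_k$ would lie in $\bkryl_{k-1}(A,\B)=\blockspan\{\V_1,\dots,\V_{k-1}\}$, say $\V_k=\sum_{h<k}\V_hC_h$. Orthogonality forces $C_h=\V_h^*\V_k=0$, hence $\V_k=0$, which contradicts $\V_k^*\V_k=I_s$. Therefore $\deg P_{k-1}=k-1$. I would additionally show that the leading coefficient is invertible, which is what one expects of an orthonormal sequence. Otherwise some nonzero right combination $P_{k-1}C$ would have degree less than $k-1$, and the same argument would give $\V_kC=0$, contradicting $C^*\V_k^*\V_kC=C^*C\neq0$.

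For (2), the isometry immediately gives $\V_i^*\V_j=\llangle P_{i-1},P_{j-1}\rrangle_{A,\B}=\delta_{i,j}I_s$. It remains to prove nestedness, i.e. $\blockspan\{\V_1,\dots,\V_k\}=\bkryl_k(A,\B)$. The inclusion $\subseteq$ holds because $\deg P_{j-1}\le k-1$ for $j\le k$. For the reverse inclusion, I would use that $P_0,\dots,P_{k-1}$ with $\deg P_j=j$ and invertible leading coefficients form a right $\C^{s\times s}$-basis of $\poly^s_{k-1}$. This follows by triangular elimination: the monomial $z^jI$ is $P_jL_j^{-1}$ plus lower-degree terms, where $L_j$ is the leading coefficient. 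Applying $\Phi$ then gives the reverse inclusion. Invertibility of the leading coefficients comes from orthonormality: if $L_jC=0$ with $C\neq0$, then $P_jC$ has lower degree and is therefore a right combination of $P_0,\dots,P_{j-1}$, so it is orthogonal to $P_j$. This gives $C^*C=\llangle P_jC,P_jC\rrangle=0$, a contradiction.

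The main obstacle is the degree and invertible-leading-coefficient bookkeeping in (1), which depends on injectivity of $\Phi$. Injectivity comes from the dimension count in \cref{prop:no-deflation-block-krylov}, equivalently from the nondeflating assumption implicit in the existence of an orthonormal basis.
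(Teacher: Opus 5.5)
Your proof is correct and follows the paper's route: orthonormality is transferred in both directions through the identity $\llangle P,Q\rrangle_{A,\B}=(P(A)\circ\B)^*(Q(A)\circ\B)$, and you also justify two steps the paper only asserts, namely $\deg P_{k-1}=k-1$ in (1) and the inclusion $\bkryl_k(A,\B)\subseteq\blockspan\{\V_1,\dots,\V_k\}$ in (2). Three small remarks: the bijectivity of $\Phi$ that you obtain from \cref{prop:no-deflation-block-krylov} is never used in (1), because your orthogonality argument settles the degree and the invertibility of the leading coefficient on its own; in (2) the reverse inclusion follows faster from a dimension count, since $\blockspan\{\V_1,\dots,\V_k\}$ has $\C$-dimension $s^2k\geq\dim_\C\bkryl_k(A,\B)$; and if you keep your leading-coefficient argument, it should be stated as an induction on $j$, because it assumes that $P_0,\dots,P_{j-1}$ already right-span $\poly^s_{j-1}$.
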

\begin{proof}
  For all $k=1,\dots,m$, $\V_k\in\bkryl_k(A,\B)$, so we can write $\V_k=P_{k-1}(A)\circ\B$ for some matrix polynomial $P_{k-1}\in\poly^s_{k-1}$. Since the basis is nested, $\deg(P_{k-1})=k-1$. Moreover, from the definition of $\llangle\cdot,\cdot\rrangle_{A,\B}$, we deduce that the polynomials are orthonormal since 
  \begin{equation}
    \label{eq:OMP-krylov-basis-equivalence}
  \llangle P_i,P_j\rrangle_{A,\B} = \V_{i+1}^*\V_{j+1}=\delta_{i,j}I_{s}.
  \end{equation}
  
  Conversely, if $P_0,\dots,P_{m-1}$ are orthonormal matrix polynomials, then $\V_k\in\bkryl_k(A,\B)$ for all $k$, and~\cref{eq:OMP-krylov-basis-equivalence} still holds. Thus, $\V_1,\dots,\V_{m}$ form a nested orthonormal basis of $\bkryl_{m}(A,\B)$.
\end{proof}

As a consequence of~\cref{thm:correspondence-basis-orthogonal-matrix-polynomials}, for nondeflating block Krylov subspaces, there is a one-to-one correspondence between nested orthonormal bases of $\bkryl_m(A,\B)$ and orthonormal matrix polynomials with respect to $\llangle\cdot,\cdot\rrangle_{A,\B}$.

The following theorem clarifies the link between matrix polynomials and block Krylov subspaces in the nondeflating case.
\begin{theorem}
  \label{thm:equivalence-nodeflation-nondegenerate-isomorphism}
  Let $\bkryl_m(A,\B)$ be the block Krylov subspace equipped with the matrix-valued Euclidean inner product, and let $\poly^s_{m-1}$ be the space of $s\times s$ matrix polynomials with degree at most $m-1$ equipped with $\llangle\cdot,\cdot\rrangle_{A,\B}$. Consider the linear mapping 
  \begin{equation*}
\mathcal T :
\poly_{m-1}^{s}
\longrightarrow
\bkryl_{m}(A,\vec B),
\qquad
\mathcal T(P)=P(A)\circ\vec B.
\end{equation*}
The following are equivalent.
\begin{enumerate}
  \item No deflation occurs in $\bkryl_m(A,\B)$.
  \item The mapping $\mathcal{T}$ is an isometric isomorphism.
  \item The inner product $\llangle\cdot,\cdot\rrangle_{A,\B}$ is nondegenerate over $\poly^s_{m-1}$.
\end{enumerate}

\end{theorem}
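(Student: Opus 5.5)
The three conditions all reduce to injectivity of $\mathcal T$, so I would prove the cycle (1)$\Rightarrow$(2)$\Rightarrow$(3)$\Rightarrow$(1) through that property.

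First, some observations that hold regardless of deflation. By \cref{eq:block-krylov-subspace-definition-matrix-polynomials}, $\mathcal T$ is surjective onto $\bkryl_m(A,\B)$. It is $\C$-linear, and it is also right $\C^{s\times s}$-linear: $\mathcal T(PC)=\mathcal T(P)C$. By the definition \cref{eq:matrix-poly-inner-product-generic}, $\mathcal T$ preserves the matrix-valued inner products identically, since
\[
\llangle P,Q\rrangle_{A,\B}=\mathcal T(P)^*\mathcal T(Q)=\llangle \mathcal T(P),\mathcal T(Q)\rrangle .
\]
Consequently, being an isometric isomorphism is equivalent to $\mathcal T$ being injective.

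(1)$\Rightarrow$(2): $\poly^s_{m-1}$ has complex dimension $s^2m$. By \cref{prop:no-deflation-block-krylov}, no deflation means $\dim_\C\bkryl_m(A,\B)=s^2m$. A surjective linear map between spaces of equal finite dimension is injective, so by the observation above $\mathcal T$ is an isometric isomorphism.

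(2)$\Rightarrow$(3): Take $P\neq 0$. Injectivity gives $\mathcal T(P)\neq\vec 0$, so $\llangle P,P\rrangle_{A,\B}=\mathcal T(P)^*\mathcal T(P)$ is a nonzero positive semidefinite matrix, since a matrix $X$ with $X^*X=0$ must be $0$. This is exactly the nondegeneracy notion the paper uses.

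(3)$\Rightarrow$(1): Suppose $\mathcal T(P)=\vec 0$. Then $\llangle P,P\rrangle_{A,\B}=0$, and nondegeneracy forces $P\equiv 0$. Hence $\mathcal T$ is injective, $\dim_\C\bkryl_m(A,\B)=\dim_\C\poly^s_{m-1}=s^2m$, and \cref{prop:no-deflation-block-krylov} gives no deflation.

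The only delicate points are keeping the nondegeneracy notion consistent with the paper's definition ($\llangle P,P\rrangle\neq 0$ rather than positive definite), and interpreting the dimension count in \cref{prop:no-deflation-block-krylov} as the $\C$-dimension of the space of $n\times s$ block vectors. With those conventions fixed, every step is elementary linear algebra.
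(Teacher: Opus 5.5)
Your proof is correct, and your steps (1)$\Rightarrow$(2) and (2)$\Rightarrow$(3) match the paper's. The first uses surjectivity plus the dimension count $s^2m$ from \cref{prop:no-deflation-block-krylov}. The second uses $\mathcal T(P)^*\mathcal T(P)=\vec 0\Rightarrow\mathcal T(P)=\vec 0$, which the paper phrases as a rank statement.

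The real difference is in (3)$\Rightarrow$(1). The paper argues constructively. Nondegeneracy on $\poly^s_{m-1}$ yields orthonormal matrix polynomials $P_0,\dots,P_{m-1}$. By \cref{thm:correspondence-basis-orthogonal-matrix-polynomials}, their images $P_{k-1}(A)\circ\B$ form an orthonormal block basis of $\bkryl_m(A,\B)$. The orthonormal-basis characterization in \cref{prop:no-deflation-block-krylov} then rules out deflation. You instead note that $\mathcal T(P)=\vec 0$ forces $\llangle P,P\rrangle_{A,\B}=\vec 0$ and hence $P\equiv 0$. So $\mathcal T$ is bijective, and the dimension characterization applies.

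Your route is more elementary and uniform: it shows that all three conditions amount to injectivity of $\mathcal T$. It also needs only the ``$\llangle P,P\rrangle\neq\vec 0$'' form of nondegeneracy. The matrix Gram--Schmidt step behind the paper's argument instead needs $\llangle P,P\rrangle_{A,\B}$ to be invertible for monic $P$; this is the positive-definite form, which the paper treats as equivalent. In exchange, the paper's route produces the orthonormal basis explicitly and ties the equivalence directly to the OMP--Krylov correspondence that drives the rest of the paper.
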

\begin{proof}
  We prove that $(1) \implies (2) \implies (3) \implies (1)$.

  \emph{$(1)\implies(2)$.}
  The mapping $\mathcal{T}$ is surjective, because of~\cref{eq:block-krylov-subspace-definition-matrix-polynomials}. By~\cref{prop:no-deflation-block-krylov}, the dimension of $\bkryl_m(A,\B)$ as a $\C$-vector space is $s^2m$, which coincides with the dimension of $\poly^s_{m-1}$. Hence, $\mathcal{T}$ is a surjective mapping between spaces with the same dimension, and thus an isomorphism. It is also isometric since, by definition,
  \begin{equation*}
    \llangle P,Q\rrangle_{A,\B} = \mathcal{T}(P)^*\,\mathcal{T}(Q).
  \end{equation*}

  \emph{$(2)\implies(3)$.} Suppose that $\llangle\cdot,\cdot\rrangle_{A,\B}$ is degenerate, so there exists $P(z)\in\poly^s_{m-1}$, $P(z)\not\equiv 0$, such that
  \begin{equation*}
    \llangle P,P\rrangle_{A,\B} = (P(A)\circ\B)^*\,(P(A)\circ\B) = \vec 0.
  \end{equation*}
  Then, $P(A)\circ \B=\vec 0$, since it has the same rank as $\llangle P,P\rrangle_{A,\B}$. Hence $\mathcal{T}$ is not injective, contradicting the fact that $\mathcal{T}$ is an isomorphism.

  (3)$\implies$(1). If $\llangle\cdot,\cdot\rrangle_{A,\B}$ is nondegenerate over $\poly^s_{m-1}$, there exists a sequence $P_0,\dots,P_{m-1}$ of orthonormal matrix polynomials. Then, $\bkryl_m(A,\B)$ admits an orthonormal basis of block vectors, made by 
  \begin{equation*}
    \V_k = P_{k-1}(A)\circ \B,\quad k=1,\dots,m.
  \end{equation*}
  Hence, $\bkryl_m(A,\B)$ does not deflate by~\cref{prop:no-deflation-block-krylov}.
\end{proof}

\subsection{Spectral measures induced by $(A,\vec B)$}
For normal matrices, the inner product $\llangle\cdot,\cdot\rrangle_{A,\B}$ admits a spectral representation.

\begin{theorem}
  \label{thm:matrix-valued-inner-product-poly-discrete}
  Let $A$ be normal with the orthonormal eigendecomposition
  \begin{equation}
    \label{eq:eigendecomposition}
    A = \sum_{j=1}^n \lambda_j \vec u_j \vec u_j^*.
  \end{equation}
  Then, for all matrix polynomials $P(z)$ and $Q(z)$,
  \begin{equation}
    \label{eq:matrix-poly-discrete-inner-product}
    \llangle P,Q\rrangle_{A,\B}
    =
    \sum_{j=1}^n
    P(\lambda_j)^*
    \big( \vec u_j^* \vec B \big)^*
    \big( \vec u_j^* \vec B \big)
    Q(\lambda_j).
  \end{equation}
\end{theorem}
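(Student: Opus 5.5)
The approach is to expand both matrix polynomials in the monomial basis, apply the spectral decomposition of $A$ to each power, and then collect terms eigenvalue by eigenvalue. Write $P(z)=\sum_{k} z^k C_k$ and $Q(z)=\sum_{l} z^l D_l$ with $C_k,D_l\in\C^{s\times s}$, padding with zero coefficients so that both sums run over the same finite range. By the definition~\cref{eq:matrix-polynomial-action} of the action,
\begin{equation*}
P(A)\circ\B=\sum_k A^k\B C_k,\qquad Q(A)\circ\B=\sum_l A^l\B D_l .
\end{equation*}
Normality of $A$ together with~\cref{eq:eigendecomposition} gives $A^k=\sum_j \lambda_j^k \vec u_j\vec u_j^*$, because the $\vec u_j$ are orthonormal and hence the rank-one projectors $\vec u_j\vec u_j^*$ are mutually orthogonal idempotents summing to $I$.

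The first key step is to rewrite each block vector as a sum over eigenvectors. Using the expression for $A^k$,
\begin{equation*}
P(A)\circ\B=\sum_j \vec u_j (\vec u_j^*\B)\sum_k \lambda_j^k C_k=\sum_j \vec u_j\,(\vec u_j^*\B)\,P(\lambda_j).
\end{equation*}
Here $\vec u_j^*\B$ is a $1\times s$ row, so the scalar $\lambda_j^k$ commutes past it. The same computation gives $Q(A)\circ\B=\sum_i \vec u_i(\vec u_i^*\B)Q(\lambda_i)$.

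The second step is to take the adjoint of the first expression:
\begin{equation*}
(P(A)\circ\B)^*=\sum_j P(\lambda_j)^*(\vec u_j^*\B)^*\vec u_j^*.
\end{equation*}
We then multiply by the expression for $Q(A)\circ\B$. Orthonormality $\vec u_j^*\vec u_i=\delta_{i,j}$ collapses the resulting double sum to the diagonal terms, and this yields~\cref{eq:matrix-poly-discrete-inner-product}.

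There is no serious obstacle. The one point requiring care is the ordering of the noncommuting factors. The coefficients $C_k$ act on the right of the $1\times s$ row $\vec u_j^*\B$, so $P(\lambda_j)$ must stay on the right of that row. Only the scalar powers $\lambda_j^k$ can be moved freely.

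As a remark, the result can be read as $\llangle P,Q\rrangle_{A,\B}=\llangle P,Q\rrangle_{\bmu}$ for the discrete Hermitian positive semidefinite matrix measure
\begin{equation*}
\bmu=\sum_j (\vec u_j^*\B)^*(\vec u_j^*\B)\,\delta_{\lambda_j}.
\end{equation*}
This is the block analogue of the scalar spectral measure $\mu$ in~\cref{eq:discrete-inner-product-scalar}.
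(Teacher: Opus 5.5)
Your proposal is correct and follows the paper's proof. Like the paper, you expand $P(A)\circ\B$ and $Q(A)\circ\B$ in the monomial basis, use $A^k=\sum_j\lambda_j^k\vec u_j\vec u_j^*$ to get $\sum_j \vec u_j\vec u_j^*\B\,P(\lambda_j)$ (and likewise for $Q$), and collapse the double sum in the product by orthonormality of the $\vec u_j$; your remarks on the order of the noncommuting factors and on $\sum_j \vec u_j\vec u_j^*=I$ (needed for $k=0$) are welcome but do not change the argument.
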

\begin{proof}
  Let
  \begin{equation*}
    P(z) = \sum_{k=0}^d z^kC_k.
  \end{equation*}
  Expanding $P(A)\circ \B$ with~\cref{eq:matrix-polynomial-action}, together with the eigendecomposition~\cref{eq:eigendecomposition}, gives us the following identity:
  \begin{align}
    \nonumber
    P(A)\circ\B = \sum_{k=0}^d A^k\B C_k
    &= \sum_{k=0}^d \left( \sum_{j=1}^n \lambda_j^k\vec u_j\vec u_j^* \right)\B C_k\\
    \nonumber
    &= \sum_{j=1}^n \vec u_j\vec u_j^* \B\left( \sum_{k=0}^d  \lambda_j^kC_k \right)\\ \label{eq:spectral-identity-action-P}
    &= \sum_{j=1}^n \vec u_j\vec u_j^* \B P(\lambda_j).
  \end{align}
  Similarly, for $Q(A)\circ\B$ we get
  \begin{equation}
    \label{eq:spectral-identity-action-Q}
    Q(A)\circ\B = \sum_{j=1}^n \vec u_j\vec u_j^* \B Q(\lambda_j).
  \end{equation}
  By inserting~\cref{eq:spectral-identity-action-P} and~\cref{eq:spectral-identity-action-Q} into the formula $(P(A)\circ \B)^*(Q(A)\circ \B)$, we get
  \begin{align*}
    \llangle P,Q\rrangle_{A,\B} &= \left( \sum_{i=1}^n P(\lambda_i)^* \B^* \vec u_i \vec u_i^* \right) \left( \sum_{j=1}^n \vec u_j\vec u_j^* \B Q(\lambda_j)\right)\\
    &= \sum_{i=1}^n \sum_{j=1}^n P(\lambda_i)^* \B^* \vec u_i (\vec u_i^* \vec u_j)\vec u_j^* \B Q(\lambda_j)\\
    &= \sum_{j=1}^n P(\lambda_j)^* \B^* \vec u_j \vec u_j^* \B Q(\lambda_j),
  \end{align*}
  where we have used the fact that $\vec u_1,\dots,\vec u_n$ are orthonormal vectors.
\end{proof}

\begin{remark}
  A similar representation for the matrix-valued inner product $\llangle P,Q\rrangle_{A,\B}$ has been derived in~\cite{LundPhD} for the Hermitian case. Here, our formulation works for any normal matrix $A$, since it relies only on a spectral decomposition.
\end{remark}

The identity~\cref{eq:matrix-poly-discrete-inner-product} suggests introducing a matrix-valued measure $\bmu$ supported on the spectrum of $A$ by defining, for $\Omega\subset\C$,
\begin{equation}
  \label{eq:spectral-measure}
  \bmu(\Omega)
  =
  \sum_{\lambda_j\in\Omega} 
  (\vec u_j^*\vec B)^*(\vec u_j^*\vec B).
\end{equation}
The measure $\bmu$ is Hermitian positive semidefinite, and its support consists of those eigenvalues of $A$ whose eigenspaces have nonzero components in the columns of $\vec B$. Moreover, the rank of $\bmu(\{\lambda_j\})$ equals the dimension of the projection of $\vec B$ onto the eigenspace corresponding to $\lambda_j$. In particular, if an eigenspace is orthogonal to $\vec B$, then it does not contribute to the inner product and is orthogonal to the block Krylov subspace $\kryl_{m}^{\square}(A,\B)$.

With this notation, the inner product induced by $(A,\vec B)$ can be written as
\begin{equation}
  \label{eq:poly-inner-product-integral-representation}
\llangle P,Q\rrangle_{A,\vec B}
=
\int_{\sigma(A)}
P(z)^* \, \mathrm d\bmu(z) \, Q(z).
\end{equation}
This representation has important consequences. When $A$ is Hermitian, the measure $\bmu$ is supported on the real line, and the three-term recurrence of the orthogonal matrix polynomials coincides with the recurrence given by the block Lanczos algorithm. When $A$ is unitary, any of its eigenvalues has unit modulus, so $\bmu$ is supported on the unit circle, and thus the associated orthogonal matrix polynomials satisfy the Szeg\H{o} recurrence relations~\cite{DaPuSi08,SinVanAs94}. We exploit this property in~\cref{sec:unitary-block-Krylov}.

\subsection{Deflated block Krylov subspaces and singular matrix polynomials}
To describe the relation in case of deflation, we need some notions on rectangular matrix polynomials and their action on block vectors.

A rectangular matrix polynomial has the form
\begin{equation*}
  \widetilde{P}(z) = \sum_{k=0}^d z^k \ttilde{C}_k,\qquad \ttilde{C}_k\in\C^{s\times r}.
\end{equation*}
For rectangular matrix polynomials, we can still define their action on $A\in\C^{n\times n}$ and $\B\in\C^{n\times s}$ as
\begin{equation*}
  \ttilde{P}(A)\circ\B = \sum_{k=0}^d A^k\B\ttilde{C}_k.
\end{equation*}
Note that $\ttilde{P}(A)\circ\B\in\C^{n\times r}$.

Consider~\cref{alg:block-arnoldi}. At the $k$-th step, we have the block vectors $\V_1,\dots,\V_k\in\C^{n\times s}$ which form an orthonormal basis of $\bkryl_k(A,\B)$. 
Suppose that a deflation is encountered, so
\begin{equation*}
  \W_{k+1} = A\V_k-\sum_{h=1}^k V_h H_{h,k},\quad H_{h,k} = \V_h^* A \V_k,
\end{equation*}
has rank $r<s$. Hence, it is impossible to compute an orthonormal basis of $\bkryl_{k+1}(A,\B)$. However, we can compute a rank-revealing factorization
\begin{equation*}
  \W_{k+1} = \ttilde{\V}_{k+1} \ttilde{R},\qquad \ttilde{\V}_{k+1}\in\C^{n\times r}, \quad \ttilde{R} \in\C^{r\times s},
\end{equation*}
such that 
\begin{equation*}
  \ttilde{\V}_{k+1}^*\ttilde{\V}_{k+1} = I_r,\qquad \V_i^*\ttilde{\V}_{k+1} = \vec 0_{s\times r}\quad\text{for all $i\leq k$}.
\end{equation*} 
Since $\ttilde{R}$ has rank $r$, consider any matrix $\ttilde{S}\in\C^{s\times r}$ such that\footnote{For example, the Moore-Penrose pseudoinverse of $\ttilde{R}$.}
\begin{equation*}
  \ttilde{R}\,\ttilde{S} = I_r.
\end{equation*}
Then,
\begin{equation*}
  \ttilde{\V}_{k+1} = \W_{k+1}\ttilde{S} = A\V_k\ttilde{S} - \sum_{h=1}^k V_h H_{h,k} \ttilde{S}.
\end{equation*}

We can link $\ttilde{\V}_{k+1}$ to a rectangular matrix polynomial as follows. Let $P_0,\dots,P_k$ be the $s\times s$ orthogonal matrix polynomials such that
\begin{equation*}
  \V_j = P_{j-1}(A)\circ\B,\qquad \llangle P_i,P_j\rrangle_{A,\B} = \delta_{i,j}I_{s}.
\end{equation*}
Then, define
\begin{equation*}
  \ttilde{P}_{k+1}(z) = zP_k(z)\ttilde{S} - \sum_{h=1}^{k} P_{h-1}(z)H_{h,k}\ttilde{S}.
\end{equation*}
This $s\times r$ matrix polynomial satisfies
\begin{equation*}
  \ttilde{P}_{k+1}(A)\circ\B = \ttilde{\V}_{k+1}.
\end{equation*}
Moreover, if $A$ is normal and $\llangle\cdot,\cdot\rrangle_{A,\B}$ has the expression~\cref{eq:poly-inner-product-integral-representation}, we can write
\begin{equation*}
  \int_{\sigma(A)} \ttilde{P}_{k+1}(z)^* \, \de \bmu(z) \, \ttilde{P}_{k+1}(z) = I_r,\qquad 
  \int_{\sigma(A)} P_j(z)^* \, \de \bmu(z) \, \ttilde{P}_{k+1}(z) = \vec 0_{s\times r}\quad\text{for all $j\leq k$.}
\end{equation*}
The correspondence we have just established may motivate further analysis of degenerate matrix-valued inner products and the synergy between square and rectangular orthogonal matrix polynomials.

\subsection{Matrix-valued rational functions and rational block Krylov subspaces}
\label{subsec:rational-block-krylov}

The connection between orthogonal matrix polynomials and block Krylov subspaces extends to the rational case. Consider $A\in\C^{n\times n}$, $\B\in\C^{n\times s}$, and a set of poles $\btheta_{m-1} = (\theta_1,\dots,\theta_{m-1})$, such that each finite pole lies outside the spectrum of $A$. The \emph{block rational Krylov subspace} associated with $A,\B,\btheta_{m-1}$ is
\begin{equation}
  \label{eq:rational-block-krylov-subspace}
  \rbkryl_{m}(A,\B,\btheta_{m-1}) = q_{m-1}(A)^{-1}\bkryl_m(A,\B), 
\end{equation}
where
\begin{equation*}
  q_{m-1}(z) = \prod_{\substack{k=1\\\theta_k\neq\infty}}^{m-1} (z-\theta_k).
\end{equation*}
Note that the degree of $q_{m-1}$ equals the number of finite poles. In particular, if each pole is infinite, $q_{m-1}\equiv 1$ and the $\rbkryl_m(A,\B,\btheta_{m-1}) = \bkryl_m(A,\B)$.

 Elsworth and G\"uttel~\cite{ElsGutt20} used matrix-valued rational functions to characterize~\cref{eq:rational-block-krylov-subspace} in the following way: given
 \begin{equation}
  \label{eq:matrix-valued-rational-function-generic}
  R(z) = q_{m-1}(z)^{-1}P(z),\quad P(z) \in \poly^s,
 \end{equation}
the action of $R$ on the pair $(A,\B)$ is
\begin{equation*}
  R(A)\circ \B = q_{m-1}(A)^{-1} P(A)\circ\B.
\end{equation*}
Let $\mathcal{R}^{s}_{m-1}(q_{m-1}) := q_{m-1}^{-1}\poly^{s}_{m-1}$ denote the set of rational functions of the form~\cref{eq:matrix-valued-rational-function-generic} with $P(z)\in\poly^{s}_{m-1}$. Then
\begin{equation}
  \label{eq:rational-block-krylov-functional-representation}
  \rbkryl_{m}(A,\B,\btheta_{m-1}) = \left\{ R(A)\circ\B : R(z)\in \mathcal{R}^{s}_{m-1}(q_{m-1}) \right\}.
\end{equation}
The polynomial inner product in~\cref{eq:matrix-poly-inner-product-generic} extends to this setting: for $R,S\in \mathcal{R}^{s}_{m-1}(q_{m-1})$, 
\begin{equation*}
  \llangle R,S\rrangle_{A,\B} = (R(A)\circ \B)^*\,(S(A)\circ\vec B).
\end{equation*}
If $A$ is normal, we get the analogue of~\cref{thm:matrix-valued-inner-product-poly-discrete}.
\begin{theorem}
  \label{thm:matrix-valued-inner-product-rational-discrete}
  Let $A$ be normal with the orthonormal eigendecomposition~\cref{eq:eigendecomposition}. Then, for all matrix-valued rational functions $R(z)$ and $S(z)$ of the form~\cref{eq:matrix-valued-rational-function-generic}, we have
  \begin{equation*}
    \llangle R,S\rrangle_{A,\B} = \sum_{j=1}^n
    R(\lambda_j)^*
    \big( \vec u_j^* \vec B \big)^*
    \big( \vec u_j^* \vec B \big)
    S(\lambda_j).
  \end{equation*}
\end{theorem}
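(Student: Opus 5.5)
The plan is to reduce the rational case to the spectral identity already obtained in the proof of \cref{thm:matrix-valued-inner-product-poly-discrete}. Write $R(z)=q_{m-1}(z)^{-1}P(z)$ and $S(z)=q_{m-1}(z)^{-1}Q(z)$ with $P,Q\in\poly^s$. The key observation is that $q_{m-1}$ is a \emph{scalar} polynomial. Hence $q_{m-1}(A)^{-1}$ is an ordinary matrix that acts on the left of the block vector, and this does not interfere with the right multiplication by the coefficients $C_k$ that defines the action $\circ$.

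First, I will diagonalize $q_{m-1}(A)^{-1}$ using the eigendecomposition \cref{eq:eigendecomposition}. Since $A$ is normal, $q_{m-1}(A)=\sum_{j=1}^n q_{m-1}(\lambda_j)\vec u_j\vec u_j^*$. Every finite pole lies outside $\sigma(A)$, so $q_{m-1}(\lambda_j)\neq 0$ for all $j$. This gives $q_{m-1}(A)^{-1}=\sum_{j=1}^n q_{m-1}(\lambda_j)^{-1}\vec u_j\vec u_j^*$. Combining this with the identity \cref{eq:spectral-identity-action-P}, $P(A)\circ\B=\sum_j \vec u_j\vec u_j^*\B P(\lambda_j)$, and using orthonormality $\vec u_i^*\vec u_j=\delta_{i,j}$, I obtain
\begin{equation*}
R(A)\circ\B=\sum_{j=1}^n \vec u_j\vec u_j^*\B\, q_{m-1}(\lambda_j)^{-1}P(\lambda_j)=\sum_{j=1}^n \vec u_j\vec u_j^*\B\, R(\lambda_j).
\end{equation*}
Here the scalar $q_{m-1}(\lambda_j)^{-1}$ commutes with everything, so it can be moved next to $P(\lambda_j)$. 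The same computation gives the analogous expression for $S(A)\circ\B$.

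Finally, I insert both expansions into $(R(A)\circ\B)^*(S(A)\circ\B)$. This is the same double-sum computation as in the proof of \cref{thm:matrix-valued-inner-product-poly-discrete}: the cross terms vanish by orthonormality of the $\vec u_j$. What remains is $\sum_j R(\lambda_j)^*\B^*\vec u_j\vec u_j^*\B S(\lambda_j)$, which is exactly the claimed formula.

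The only step needing care is the first one. There I must justify the invertibility of $q_{m-1}(\lambda_j)$ from the pole assumption, and confirm that the scalar factor commutes past $\vec u_j\vec u_j^*\B$. Everything after that is routine.
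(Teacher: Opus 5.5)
Your proposal is correct and follows essentially the same route as the paper: you diagonalize $q_{m-1}(A)^{-1}$ via the eigendecomposition, combine it with \cref{eq:spectral-identity-action-P} to get $R(A)\circ\B=\sum_{j}\vec u_j\vec u_j^*\B\,R(\lambda_j)$, and then reuse the double-sum argument from \cref{thm:matrix-valued-inner-product-poly-discrete}. Your explicit remark that $q_{m-1}(\lambda_j)\neq 0$ because the finite poles lie outside $\sigma(A)$ is a small justification the paper leaves implicit.
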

\begin{proof}
  Let $R(z)=q_{m-1}(z)^{-1} P(z)$, where $P(z)$ is a matrix polynomial. From~\cref{eq:spectral-identity-action-P}, we get
  \begin{align*}
    R(A)\circ\B = q_{m-1}(A)^{-1}\,(P(A)\circ\B)
    &=
    \left(\sum_{j=1}^n q_{m-1}(\lambda_j)^{-1} \vec u_j\vec u_j^* \right)
    \left(\sum_{j=1}^n \vec u_j\vec u_j^* \B P(\lambda_j)\right)\\
    &=
    \sum_{j=1}^n \vec u_j\vec u_j^* \B \,q_{m-1}(\lambda_j)^{-1}P(\lambda_j)\\
    &=
    \sum_{j=1}^n \vec u_j\vec u_j^* \B R(\lambda_j).
  \end{align*}
  Similarly,
  \begin{equation*}
    S(A)\circ\B = \sum_{j=1}^n \vec u_j\vec u_j^* \B S(\lambda_j).
  \end{equation*}
  With these expansions for $R(A)\circ\B$ and $S(A)\circ\B$, the remainder of the proof is identical to that of~\cref{thm:matrix-valued-inner-product-poly-discrete}.
\end{proof}
A consequence of~\cref{thm:matrix-valued-inner-product-rational-discrete} is that
\begin{equation*}
  \llangle R,S\rrangle_{A,\B} = \int_{\sigma(A)} R(z)^*\, \mathrm d\bmu(z) \,S(z) = \llangle R,S\rrangle_{\bmu},
\end{equation*}
where $\bmu$ is the spectral measure~\cref{eq:spectral-measure} associated with $(A,\B)$. Hence, an orthonormal basis $\V_1,\dots,\V_{m}$ of $\rbkryl_{m}(A,\B,\btheta_{m-1})$ corresponds to a sequence $R_0(z),\dots,R_{m-1}(z)$ of orthonormal matrix-valued rational functions with respect to the inner product $\llangle\cdot,\cdot\rrangle_{\bmu}$.

\subsection{Laurent matrix polynomials and extended block Krylov}
\label{subsec:laurent-extended-block-krylov}

Let $A\in\C^{n\times n}$ and $\B\in\C^{n\times s}$, and consider the sequence of block vectors
\begin{equation}
  \label{eq:generators-extended-block-krylov}
  \B, A\B, A^{-1}\B, A^2\B, A^{-2}\B, \dots .
\end{equation}
The \emph{extended block Krylov subspace} $\ebkryl_m(A,\B)$ is the subspace of $\C^{n\times s}$ generated by the first $m$ block vectors in~\cref{eq:generators-extended-block-krylov}. More precisely, 
\begin{equation}
  \label{eq:extended-block-krylov-blockspan}
  \begin{aligned}
    \ebkryl_{2\ell+1}(A,\B) &= \blockspan\{ \B,A\B,A^{-1}\B,\dots,A^{\ell}\B,A^{-\ell}\B \},\\
    \ebkryl_{2\ell+2}(A,\B) &= \blockspan\{ \B,A\B,A^{-1}\B,\dots,A^{\ell}\B,A^{-\ell}\B,A^{\ell+1}\B \}.
  \end{aligned}
\end{equation}
Note that $\ebkryl_m(A,\B)$ is a particular instance of a rational block Krylov subspace, where the set of poles is $\btheta_{m-1}=(\infty,0,\infty,0,\dots)$. Furthermore,
\begin{equation}
  \label{eq:extended-polynomial-block-krylov-correspondence}
  \ebkryl_m(A,\B) = A^{-\ell} \bkryl_m(A,\B),\quad \ell = \left\lfloor \frac{m-1}{2}\right\rfloor.
\end{equation}

The matrix-valued rational functions linked to $\ebkryl_m(A,\B)$ are Laurent matrix polynomials,
\begin{equation*}
  R(z) = \sum_{k=-d_1}^{d_2} z^k C_k,
  \quad
  C_k\in\C^{s\times s}.
\end{equation*}
Define $\laur^s_m$ as the subset of Laurent matrix polynomials generated by the first $m$ terms of the sequence\footnote{Note that this notation is quite different from the one used for polynomials: in $\poly^s_d$, $d$ is the maximal degree, not the number of generators.}
\begin{equation*}
  I_s, zI_s, z^{-1}I_s, z^2I_s, z^{-2}I_s, \dots .
\end{equation*}
Alternatively, 
\begin{equation}
  \label{eq:definition-laurent-odd-even}
  \begin{aligned}
    \laur^s_{2\ell+1} &= \left\{ \sum_{k=-\ell}^\ell z^k C_k: C_k\in\C^{s\times s}\right\} = z^{-\ell}\poly^s_{2\ell},\\
     \laur^s_{2\ell+2} &= \left\{ \sum_{k=-\ell}^{\ell+1} z^k C_k: C_k\in\C^{s\times s}\right\} = z^{-\ell}\poly^s_{2\ell+1}.
  \end{aligned}
\end{equation}

From~\cref{eq:extended-block-krylov-blockspan} and~\cref{eq:definition-laurent-odd-even}, it follows that
\begin{equation}
  \label{eq:extended-block-krylov-representation-laurent-matrix-polynomials}
  \ebkryl_m(A,\B) = \{ R(A)\circ\B : R(z)\in\laur^s_{m} \}.
\end{equation}

Thanks to the correspondence between polynomial and extended block Krylov subspaces, we have the following result.
\begin{theorem}
  \label{thm:isomorphism-extended-laurent}
  Let $A$ be nonsingular. The dimension of $\ebkryl_m(A,\B)$ as a $\C$-vector space is $s^2m$ if and only if the inner product $\llangle\cdot,\cdot\rrangle_{A,\B}$ is nondegenerate over $\poly^s_{m-1}$. In this case, the mapping 
  \begin{equation*}
    \mathcal{T}:\laur^s_{m}\to \ebkryl_m(A,\B),\quad \mathcal{T}(R) = R(A)\circ\B,
  \end{equation*}
  is an isometric linear isomorphism, according to $\llangle\cdot,\cdot\rrangle_{A,\B}$ and the matrix-valued Euclidean inner product.
\end{theorem}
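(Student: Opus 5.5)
The plan is to reduce everything to the polynomial case, \cref{thm:equivalence-nodeflation-nondegenerate-isomorphism}, through the identities \cref{eq:extended-polynomial-block-krylov-correspondence} and \cref{eq:definition-laurent-odd-even}. Set $\ell=\lfloor (m-1)/2\rfloor$. The map $\X\mapsto A^{-\ell}\X$ is a $\C$-linear bijection of $\C^{n\times s}$, because $A$ is nonsingular. It sends $\bkryl_m(A,\B)$ onto $\ebkryl_m(A,\B)$, so the two spaces have the same $\C$-dimension. By \cref{prop:no-deflation-block-krylov}, $\dim_\C\ebkryl_m(A,\B)=s^2m$ holds if and only if $\bkryl_m(A,\B)$ is nondeflating. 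By \cref{thm:equivalence-nodeflation-nondegenerate-isomorphism}, this is equivalent to nondegeneracy of $\llangle\cdot,\cdot\rrangle_{A,\B}$ over $\poly^s_{m-1}$. That settles the first claim.

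For the isomorphism, I first note that $\mathcal T$ is $\C$-linear, and that it is surjective by \cref{eq:extended-block-krylov-representation-laurent-matrix-polynomials}. From \cref{eq:definition-laurent-odd-even}, in both parities $\laur^s_m=z^{-\ell}\poly^s_{m-1}$, so $\dim_\C\laur^s_m=s^2m$. A surjective linear map between spaces of equal finite dimension is bijective. Alternatively, one can write $\mathcal T(z^{-\ell}P)=A^{-\ell}(P(A)\circ\B)$, which exhibits $\mathcal T$ as the composition of the polynomial isomorphism from \cref{thm:equivalence-nodeflation-nondegenerate-isomorphism} with multiplication by $A^{-\ell}$.

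The one point needing care is the identity $(z^{-\ell}P)(A)\circ\B=A^{-\ell}\,(P(A)\circ\B)$. It follows directly from the definition of the action of a rational function $R=q^{-1}P$ with $q(z)=z^{\ell}$, as in \cref{subsec:rational-block-krylov}. Equivalently, one expands $\sum_k A^{k-\ell}\B C_k$ termwise.

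Finally, the isometry holds by definition. For $R,S\in\laur^s_m$, the inner product $\llangle R,S\rrangle_{A,\B}$ is defined in \cref{subsec:rational-block-krylov} as $(R(A)\circ\B)^*(S(A)\circ\B)=\mathcal T(R)^*\mathcal T(S)$. Hence $\mathcal T$ preserves the matrix-valued inner products. Nondegeneracy on $\laur^s_m$ also follows: if $\llangle R,R\rrangle_{A,\B}=\vec 0$, then $\mathcal T(R)=\vec 0$, and injectivity gives $R\equiv 0$.

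I expect no serious obstacle. The only subtle point is bookkeeping: one must check that the same shift $\ell$ works for both even and odd $m$, so that $\laur^s_m$ corresponds exactly to $\poly^s_{m-1}$.
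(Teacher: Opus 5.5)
Your proof is correct and follows the paper's argument: $\ebkryl_m(A,\B)=A^{-\ell}\bkryl_m(A,\B)$ gives equal $\C$-dimensions, which reduces the first claim to \cref{prop:no-deflation-block-krylov} and \cref{thm:equivalence-nodeflation-nondegenerate-isomorphism}, and then surjectivity of $\mathcal T$ together with $\dim_\C\laur^s_m=s^2m$ gives the isomorphism, with isometry holding by definition. Your alternative factorization $\mathcal T(z^{-\ell}P)=A^{-\ell}\,(P(A)\circ\B)$ and the remark on nondegeneracy over $\laur^s_m$ are correct additions that the paper does not spell out.
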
 
\begin{proof}
  From~\cref{eq:extended-polynomial-block-krylov-correspondence}, it follows that the dimensions of $\ebkryl_{m}(A,\B)$ and $\bkryl_{m}(A,\B)$ as $\C$-vector spaces coincide. Thus, the first claim follows from~\cref{thm:equivalence-nodeflation-nondegenerate-isomorphism}. 

  For the second claim, we observe that $\mathcal{T}$ is surjective by~\cref{eq:extended-block-krylov-representation-laurent-matrix-polynomials}. Moreover, the $\C$-dimension of $\laur^s_m$ is $s^2m$. Then, $\mathcal{T}$ is an isomorphism if and only if the $\C$-dimension of $\ebkryl_m(A,\B)$ is $s^2m$, and the isometry follows from the definition of $\llangle\cdot,\cdot\rrangle_{A,\B}$.
\end{proof}

As a consequence of \cref{thm:isomorphism-extended-laurent}, orthonormal bases of $\ebkryl_m(A,\B)$ are in correspondence with orthonormal functions in $\laur^s_m$ with respect to $\llangle\cdot,\cdot\rrangle_{A,\B}$. We use this link in~\cref{sec:unitary-extended-block-krylov-CMV} to relate CMV matrices with extended block Krylov subspaces associated with unitary matrices.

\section{Unitary case: Szeg\H{o} recurrence and block isometric Arnoldi}
\label{sec:unitary-block-Krylov}

In this section we report the relevant results on orthogonal matrix polynomials on the unit circle (OMPUC), and apply them to the orthogonalization of block Krylov subspaces associated with unitary matrices.

\subsection{OMPUC and Szeg\H{o} recurrence}
\label{subsec:szego}

Let $\bmu$ be a matrix-valued measure supported on $\T$. We consider the right inner product
\begin{equation*}
\llangle P,Q\rrangle_{\bmu}
=
\int_{\T} P(z)^*\,\de\bmu(z)\,Q(z).
\end{equation*}
Assume that $\llangle\cdot,\cdot\rrangle_{\bmu}$ is nondegenerate, so that there exists a sequence $\{\varphi_k^R\}_{k\ge0}$, with $\deg \varphi_k^R=k$, of right orthonormal matrix polynomials that satisfy
\begin{equation*}
\llangle \varphi_i^R,\varphi_j^R\rrangle_{\bmu}
=
\delta_{i,j}I_{s}.
\end{equation*}
Similarly, we consider a sequence $\{\varphi_k^L\}_{k\ge0}$ of left
orthonormal matrix polynomials, that satisfy
\begin{equation*}
\llangle \varphi^L_i,\varphi^L_j\rrangle_{\bmu}^L 
=
\int_{\T} \varphi_i^L(z)\,\de\bmu(z)\,\varphi_j^L(z)^*
=
\delta_{i,j}I_{s}.
\end{equation*}

For $P(z)=\sum_{j=0}^k z^jP_j\in\poly^s$ we define the reversed polynomial
\begin{equation*}
P^{\#}(z)
=
z^k P(1/\overline z)^*
=
\sum_{j=0}^k z^{k-j}P_j^*.
\end{equation*}

There exists a sequence
of matrices $\{\malpha_k\}_{k\ge1}$, with
\begin{equation*}
\malpha_k\in\C^{s\times s},
\qquad
\|\malpha_k\|_2<1,
\end{equation*}
called the \emph{matrix Verblunsky coefficients}, such that
\begin{align}
  \label{eq:OMPUC-szego-recurrence}
  \begin{aligned}
    z\varphi^{R}_{k}(z) &= \varphi^R_{k+1}(z)\mrho^R_{k+1} + \varphi^{L,\#}_k \malpha_{k+1}^*,\\
    \varphi^{L,\#}_k &= \varphi^{L,\#}_{k+1} \mrho^L_{k+1} + z\varphi^R_k \malpha_{k+1},
  \end{aligned}
\end{align}
where $\varphi^{L,\#}_k=(\varphi^L_k)^{\#}$, and
\begin{equation}
  \label{eq:rho-formula}
  \mrho^R_{k+1}=(I-\malpha_{k+1}\malpha_{k+1}^*)^{1/2},
  \quad
  \mrho^L_{k+1}=(I-\malpha_{k+1}^*\malpha_{k+1})^{1/2}.
\end{equation}
The relation~\cref{eq:OMPUC-szego-recurrence} is the \emph{Szeg\H{o} recurrence}, the unit-circle analogue of the three-term recurrence on the real line. 

By Verblunsky's Theorem~\cite[Theorem 3.12]{DaPuSi08}, the matrices $\{\malpha_k\}_{k\ge1}$ provide a one-to-one correspondence with the 
matrix-valued measure $\bmu$ on $\T$ and, in turn, with the associated OMPUC. Furthermore, by~\cref{eq:OMPUC-szego-recurrence}, the Verblunsky coefficients play the role of recurrence parameters in the Szeg\H{o} theory. Note that, from the definition of $\varphi^{L,\#}_k$, it follows that
\begin{equation}
  \label{eq:OMPUC-left-reversed-normality}
  \llangle\varphi^{L,\#}_k,\varphi^{L,\#}_k\rrangle_{\bmu} = \int_{\T} \varphi^L_k(z) \,\de\bmu(z)\,\varphi^L_k(z)^* = I_{s}.
\end{equation}
If we consider the first relation in~\cref{eq:OMPUC-szego-recurrence}, we get the following identity:
\begin{equation*}
  \llangle \varphi^{L,\#}_k, z\varphi^R_k\rrangle_{\bmu} = \llangle\varphi^{L,\#}_k,\varphi^R_{k+1}\rrangle_{\bmu}\, \mrho^R_{k+1} + \llangle\varphi^{L,\#}_k,\varphi^{L,\#}_k\rrangle_{\bmu}\, \malpha_{k+1}^* = \malpha_{k+1}^*,
\end{equation*}
where we have used~\cref{eq:OMPUC-left-reversed-normality} and the fact that $\varphi^R_{k+1}$ is orthogonal to any matrix polynomial in $\poly_k^{s}$. Thus,
\begin{equation}
  \label{eq:verblunsky-formula-OMPUC}
  \malpha_{k+1} = \llangle \varphi^{L,\#}_k,z\varphi^R_k\rrangle_{\bmu}^* = \llangle z\varphi^R_k, \varphi^{L,\#}_k\rrangle_{\bmu}.
\end{equation}

\begin{remark}
  Note that the recurrence~\cref{eq:OMPUC-szego-recurrence} is also valid when $\llangle\cdot,\cdot\rrangle_{\bmu}$ is nondegenerate over $\poly^s_{d}$ for some positive integer $d$, so that only the existence of a finite sequence of left and right OMPUC is guaranteed. See the proof of~\cite[Theorem 2.13.4]{Simon-book05}.
\end{remark}

\begin{remark}
  In the monic setting, the Verblunsky coefficients can be obtained from point evaluations of the orthogonal polynomials. More precisely, if $\{\Phi_k\}_{k\geq 0}$ denotes the sequence of monic right orthogonal polynomials, then
  \begin{equation*}
    \malpha_{k} = \Phi_{k}(0)^*.
  \end{equation*}
  While this characterization is theoretically transparent, it is not well suited to a Krylov framework, since polynomials are accessed only through their action on $(A,\B)$. In contrast, the formula in~\cref{eq:verblunsky-formula-OMPUC} is more suited for our setting.
\end{remark}

\begin{remark}
  The Szeg\H{o} relation~\cref{eq:OMPUC-szego-recurrence} involves the right OMPUC $\{\varphi^R_k\}_{k\geq 0}$ and the reversed left OMPUC $\{\varphi^{L,\#}_k\}_{k\geq 0}$. An analogous relation holds between the sequences $\{ \varphi^L_k \}_{k\geq0}$ and $\{ (\varphi^{R}_k)^{\#} \}_{k\geq0}$; cf. \cite{DaPuSi08}. Since it is not relevant for the link with block Krylov subspaces, we omit the discussion here.
\end{remark}

\subsection{Unitary block Krylov and isometric Arnoldi}

Assume that $A$ is unitary, i.e.,
\begin{equation*}
A^*A = AA^* = I.
\end{equation*}
Since the eigenvalues of $A$ have unit modulus, the spectral measure $\bmu$ induced by $(A,\B)$ is supported on
the unit circle $\T$. Hence, the inner product induced by $(A,\B)$ has the form
\begin{equation*}
  \llangle P,Q\rrangle_{A,\B} = \int_{\mathcal{T}} P(z)^*\,\de\bmu(z)\,Q(z).
\end{equation*}
Through the spectral measure $\bmu$, we can also define the corresponding left inner product:
\begin{equation*}
  \llangle P,Q\rrangle_{A,\B}^L = \int_{\mathcal{T}} P(z)\,\de\bmu(z)\,Q(z)^*.
\end{equation*}

If the block Krylov subspace $\bkryl_m(A,\B)$ is nondeflating, then $\llangle\cdot,\cdot\rrangle_{A,\B}$ is nondegenerate over $\poly^s_{m-1}$ by~\cref{thm:equivalence-nodeflation-nondegenerate-isomorphism}. Under this assumption, there is a sequence $\varphi^R_0,\dots,\varphi^R_{m-1}$ of right OMPUC, and a sequence $\varphi^L_0,\dots,\varphi^L_{m-1}$ of left OMPUC. Through these sequences, we can construct the following block vectors:
\begin{equation*}
\V_{k}=\varphi_{k-1}^R(A)\circ\B,
\qquad
\widetilde \V_{k}=\varphi_{k-1}^{L,\#}(A)\circ\B,\qquad k=1,\dots,m.
\end{equation*}
From~\cref{thm:equivalence-nodeflation-nondegenerate-isomorphism}, the block vectors $\V_1,\dots,\V_{m}$ form a nested orthonormal basis of $\bkryl_m(A,\B)$, i.e.,
\begin{equation*}
  \V_i^*\V_j = \llangle \varphi^R_{i-1},\varphi^R_{j-1}\rrangle_{\bmu} = \delta_{i,j}I_{s},\quad \bkryl_k(A,\B) = \blockspan\{ \V_1,\dots,\V_k \},
\end{equation*}
for all $i,j$ and $k\leq m$. Furthermore, the Szeg\H{o} recurrence~\cref{eq:OMPUC-szego-recurrence} valid for matrix polynomials induces the following relation between the $\V_k$ and $\widetilde{\V}_k$:
\begin{equation}
  \label{eq:block-isometric-arnoldi}
  \begin{aligned}
    A\V_k &= \V_{k+1}\mrho^R_k + \widetilde{\V}_k \malpha_k^*,\\
    \widetilde{\V}_k &= \widetilde{\V}_{k+1}\mrho^L_k + A\V_k\malpha_k,
  \end{aligned}
\end{equation}
for all $k\geq 1$.
By~\cref{eq:verblunsky-formula-OMPUC}, we have
\begin{equation}
  \label{eq:alpha-formula-block-krylov}
  \malpha_k = \V_k^* A^* \widetilde{\V}_k,
\end{equation}
and $\mrho^R_k, \mrho^L_k$ are defined according to~\cref{eq:rho-formula}. 
The relation~\cref{eq:block-isometric-arnoldi} can be used to iteratively construct $\V_{k},\widetilde{\V}_{k}$. In fact,
\begin{equation*}
  \begin{aligned}
    \V_{k+1} &= \W_{k+1} (\mrho^R_k)^{-1},\quad \W_{k+1} = A\V_k - \widetilde{\V}_k\malpha_k^*,\\
    \widetilde{\V}_{k+1} &= \widetilde{\W}_{k+1}(\mrho^L_k)^{-1},\quad \widetilde{\W}_{k+1} = \widetilde{\V}_k - A\V_k\malpha_k.
  \end{aligned}
\end{equation*}
Note that $\|\malpha_k\|_2<1$, so the matrices $\mrho^R_k$ and $\mrho^L_k$ are both Hermitian positive definite and, in particular, invertible.
We can implement this strategy to generate an orthonormal basis of $\bkryl_m(A,\B)$. The induced procedure is reported in~\cref{alg:block-isometric-arnoldi}, and is a generalization of the \emph{isometric Arnoldi process}, introduced by Gragg in~\cite{Gragg93isometric}.

\begin{algorithm}[ht]
\caption{Block isometric Arnoldi}
\label{alg:block-isometric-arnoldi}
\begin{algorithmic}[1]
\REQUIRE Unitary $A\in\C^{n\times n}$, starting block vector $\B\in\C^{n\times s}$ such that $\B^*\B=I_{s}$, number of steps $m$
\ENSURE Orthonormal basis $\vec V_1,\dots,\vec V_{m}$ of $\bkryl_{m}(A,\B)$, auxiliary block vectors $\widetilde{\V}_1,\dots,\widetilde{\V}_{m}$, and Verblunsky coefficients $\malpha_1,\dots,\malpha_{m-1}$

\STATE \textbf{Initialize.} Set $\V_1 = \widetilde{\V}_1 = \B$

\FOR{$k=1,2,\dots,m-1$}

    \STATE \textbf{Multiplication with $A$.} Compute $\vec X_{k+1} = A\V_k$\label{line:matrix-block-vector-product}
    \STATE \textbf{Verblunsky coefficient.} Compute $\malpha_k = \vec X_{k+1}^* \widetilde{\V}_k$
    \STATE Compute $\Y_{k+1} = \X_{k+1} - \widetilde{\V}_{k}\malpha^*_k$ and $\widetilde{\Y}_{k+1} = \widetilde{\V}_k - \X_{k+1}\malpha_k$\label{line:verblunsky1}
    \STATE Form $\mrho^R_k = (I_{s}-\malpha_k\malpha_k^*)^{\frac{1}{2}}$, $\mrho^L_k = (I_{s}-\malpha_k^*\malpha_k)^{\frac{1}{2}}$
    \STATE Compute $\V_{k+1} = \Y_{k+1}(\mrho^R_k)^{-1}$ and $\widetilde{\V}_{k+1} = \widetilde{\Y}_{k+1}(\mrho^L_k)^{-1}$

\ENDFOR
\end{algorithmic}
\end{algorithm}

We now analyze the cost-per-iteration of~\cref{alg:block-isometric-arnoldi}. The cost of computing $\X_{k+1}$ at line~\ref{line:matrix-block-vector-product} depends on the structure of $A$; for instance, if $A$ is sparse, it is $O(s\operatorname{nnz}(A))$, where $\operatorname{nnz}(A)$ is the number of nonzero entries. The cost of computing $\malpha_k$, $\Y_{k+1}, \widetilde{\Y}_{k+1}$, $\V_{k+1}$, $\widetilde{\V}_{k+1}$ is $O(s^2n)$. Computing  $\mrho^R_k$, $\mrho^L_k$ via~\cref{eq:rho-formula}, i.e., forming $I_s-\malpha_k\malpha_k^*$, $I_s-\malpha_k^*\malpha_k$ and extracting the square roots, is $O(s^3)$, and thus negligible compared to the other operations provided that $s\ll n$.

\subsection{Schur parametrization of the block Hessenberg matrix of recurrences}

The Szeg\H{o} recurrence provides an efficient short recurrence for orthogonalizing the block Krylov subspace, involving the orthonormal basis $\V_1,\dots,\V_{m}$ and the auxiliary block vectors $\widetilde{\V}_1,\dots,\widetilde{\V}_{m}$. We now show that the same recurrence also yields an explicit parametrization of the block Hessenberg matrix of recurrences, extending the results valid for block-size $s=1$; see, e.g.,~\cite{HelKuiVanba05}.

Let
\begin{equation*}
  \mathcal{V}_{m}=[\V_1,\dots,\V_{m}],
  \qquad
  \mathcal H_{m}=\mathcal{V}_{m}^*A\mathcal{V}_{m}.
\end{equation*}
The matrix $\mathcal H_{m}$ is the recurrence matrix for the basis $\V_1,\dots,\V_{m}$; see~\cref{eq:recurrence-block-arnoldi}.
From the nestedness of the basis, it follows that $\mathcal H_{m}$ is block upper Hessenberg.

The blocks of $\mathcal H_{m}$ can be written explicitly in terms of the Verblunsky coefficients. Combining the two relations in~\cref{eq:block-isometric-arnoldi}, and using the identity $\mrho^R_k\malpha_k=\malpha_k\mrho^L_k$, gives
\begin{equation}
  \label{eq:tilde-v-recursion-schur}
  \widetilde{\V}_{k+1}
  =
  \widetilde{\V}_k\mrho^L_k-\V_{k+1}\malpha_k .
\end{equation}
Hence, by induction from $\widetilde{\V}_1=\V_1$, for $k\geq2$,
\begin{equation}
  \label{eq:tilde-v-expansion-schur}
  \widetilde{\V}_k
  =
  \V_1\mrho^L_1\cdots\mrho^L_{k-1}
  -
  \sum_{h=2}^{k-1}
  \V_h\malpha_{h-1}\mrho^L_h\cdots\mrho^L_{k-1}
  -
  \V_k\malpha_{k-1}.
\end{equation}
Here and below, empty products are interpreted as identity matrices, while empty sums are interpreted as zero.
Substituting~\cref{eq:tilde-v-expansion-schur} into $A\V_k=\V_{k+1}\mrho^R_k+\widetilde{\V}_k\malpha_k^*$ gives, for $k=1,\dots,m$, the expansion
\begin{equation}
  \label{eq:schur-avk-expansion}
  A\V_k
  =
  \V_1\mrho^L_1\cdots\mrho^L_{k-1}\malpha_k^*
  -
  \sum_{h=2}^{k}
  \V_h\malpha_{h-1}\mrho^L_h\cdots\mrho^L_{k-1}\malpha_k^*
  +
  \V_{k+1}\mrho^R_k.
\end{equation}
This is the recurrence relation for the basis $\V_1,\dots,\V_{m}$, up to the residual term in the direction of $\V_{m+1}$ when $k=m$. Its coefficients are exactly the recurrence blocks: if $A\V_k=\sum_i\V_iH_{i,k}$, block orthonormality gives $H_{i,k}=\V_i^*A\V_k$, which is the $(i,k)$ block of $\mathcal H_{m}=\mathcal V_{m}^*A\mathcal V_{m}$. Thus~\cref{eq:schur-avk-expansion} gives
\begin{equation}
  \label{eq:schur-hessenberg-blocks}
  H_{h,k}
  =
  \begin{cases}
    \mrho^L_1\cdots\mrho^L_{k-1}\malpha_k^*,
      & h=1,\\[1mm]
    -\malpha_{h-1}\mrho^L_h\cdots\mrho^L_{k-1}\malpha_k^*,
      & 2\leq h\leq k,\\[1mm]
    \mrho^R_k,
      & h=k+1,\\[1mm]
    0,
      & h\geq k+2.
  \end{cases}
\end{equation}
Hence the projected matrix is completely determined by the Verblunsky coefficients $\malpha_1,\dots,\malpha_{m}$, which play the role of the Schur parameters in the block case, and the associated matrices $\mrho^L_k,\mrho^R_k$. Notice that the last column contains $\malpha_{m}$. Hence, if~\cref{alg:block-isometric-arnoldi} is stopped after producing $\V_1,\dots,\V_{m}$ and $\malpha_1,\dots,\malpha_{m-1}$, one additional multiplication with $A$ and one additional block inner product are needed to recover the final coefficient $\malpha_{m}$.

The same formulas can be summarized more compactly as a Schur parametrization. For $k=1,\dots,m-1$, let
\begin{equation*}
  \mathcal G_k(\malpha_k)
  =
  \begin{bmatrix}
    I_{(k-1)s} \\
    & \malpha_k^* & \mrho^L_k\\
    & \mrho^R_k & -\malpha_k\\
    &&& I_{(m-k-1)s}
  \end{bmatrix},
  \qquad
  \widetilde{\mathcal G}_{m}(\malpha_{m})
  =
  \begin{bmatrix}
    I_{(m-1)s}\\
    & \malpha_{m}^*
  \end{bmatrix}.
\end{equation*}
Then
\begin{equation}
  \label{eq:schur-hessenberg-factorization}
  \mathcal H_{m}
  =
  \mathcal G_1(\malpha_1)
  \mathcal G_2(\malpha_2)
  \cdots
  \mathcal G_{m-1}(\malpha_{m-1})
  \widetilde{\mathcal G}_{m}(\malpha_{m}).
\end{equation}
Each factor $\mathcal G_k(\malpha_k)$ is unitary. If the next Verblunsky coefficient $\malpha_{m}$ is available and satisfies $\|\malpha_{m}\|_2<1$, then the last factor is not unitary in general, but it can be completed to a unitary block factor by adding the corresponding matrices $\mrho^L_{m}$ and $\mrho^R_{m}$. Consequently, $\mathcal H_{m}$ is the projected matrix obtained from $A$ on the block Krylov subspace, but is not itself unitary in general. This factorization is the block analogue of the Schur parametrization of a unitary upper Hessenberg matrix; compare with the scalar isometric Arnoldi setting in~\cite{Gragg93isometric,Watkins93}.

In the single-vector case (i.e., for $s=1$), if one aims to orthogonalize a Krylov subspace associated with a unitary matrix while simultaneously obtaining a projected matrix with a sparse structure, it is more natural to work with extended Krylov subspaces. In that setting, the projected matrix takes the CMV form; see, e.g.,~\cite{Watkins93}. In the next section, we adapt the same idea to extended block Krylov subspaces.
\section{Block CMV matrices and extended block Krylov}
\label{sec:unitary-extended-block-krylov-CMV}

In this section, we exploit the correspondence between orthogonal Laurent matrix polynomials on the unit circle and extended block Krylov subspaces associated with a unitary $A$. We derive a short recurrence relation for the orthogonalization based on block CMV matrices, and turn this strategy into an algorithm.
\subsection{Laurent basis and block CMV matrix}
\label{subsec:cmv}
Here we report the results in~\cite{DaPuSi08} on orthogonal Laurent matrix polynomials and block CMV matrices. 

Let $\{\varphi^R_k\}_{k\geq 0}$, $\{ \varphi^L_k \}_{k\geq 0}$ be sequences of right and left OMPUC, respectively, and consider the reversed left polynomials $\{ \varphi^{L,\#}_k \}_{k\geq 0}$. The \emph{CMV basis} is a sequence $\{\chi_k\}_{k\geq0}$ of Laurent matrix polynomials defined by
\begin{equation}
  \label{eq:cmv-basis}
  \chi_{2k}(z) = z^{-k}\varphi^{L,*}_{2k}(z),\quad \chi_{2k+1}(z) = z^{-k+1} \varphi^R_{2k-1}(z).
\end{equation}
This sequence forms an orthonormal system~\cite[Proposition 3.30]{DaPuSi08}:
\begin{equation*}
  \llangle\chi_i,\chi_j\rrangle_{\bmu} = \delta_{i,j}I_{s}.
\end{equation*}
The functions $\{\chi_k\}_{k\geq 0}$ are used to define the \emph{CMV blocks}:
\begin{equation}
  \label{eq:CMV-blocks-laurent-basis}
  \mathcal{C}_{i,j} = \llangle \chi_{i-1},z\chi_{j-1}\rrangle_{\bmu}.
\end{equation}
Since the $\{\chi_k\}_{k\geq 0}$ are orthonormal, any Laurent matrix polynomial $f$ can be expressed as
\begin{equation*}
  f(z) = \sum_{h\geq 0} \chi_h(z)\llangle \chi_h,f\rrangle_{\bmu}.
\end{equation*}
Hence, for $f(z) = z\chi_k(z)$ we get
\begin{equation}
  \label{eq:CMV-recurrence-functional}
  z\chi_k(z) = \sum_{h\geq 0} \chi_h(z) \llangle \chi_h,z\chi_k\rrangle_{\bmu} = \sum_{h\geq 0} \chi_h(z) \mathcal{C}_{h+1,k+1},
\end{equation} 
and, for $f(z) = z^{-1}\chi_k(z)$,
\begin{equation}
  \label{eq:CMV-recurrence-functional-transposed}
  \begin{aligned}
    z^{-1}\chi_k(z) 
    = 
    \sum_{h\geq 0} \chi_h(z)\llangle \chi_h,z^{-1}\chi_k\rrangle_{\bmu} 
    = 
    \sum_{h\geq 0} \chi_h(z)\llangle z\chi_h,\chi_k\rrangle_{\bmu} 
    = 
    \sum_{h\geq 0} \chi_h(z)\mathcal{C}_{k+1,h+1}^*.
  \end{aligned}
\end{equation}

\begin{remark}
  The CMV basis in~\cref{eq:cmv-basis} is defined in terms of the odd right OMPUC $\varphi^R_{2k+1}$ and the even left reversed polynomials $\varphi^{L,\#}_{2k}$. A related basis depends on $\varphi^R_{2k}$ and $\varphi^{L,\#}_{2k+1}$ instead, and shares important properties with the CMV and the recurrence for the sequence $\{\chi_k\}_{k\geq 0}$. We omit the construction here since it is not relevant for our setting; see~\cite{DaPuSi08} for more details.
\end{remark}

The \emph{block CMV matrix} is the matrix whose blocks are $\mathcal{C}_{i,j}$:
\begin{equation*}
  \mathcal{C} = \begin{bmatrix}
    \mathcal{C}_{1,1}& \mathcal{C}_{1,2}&\cdots\\
    \mathcal{C}_{2,1}& \mathcal{C}_{2,2} & \ddots\\
    \vdots & \ddots&\ddots
  \end{bmatrix}
\end{equation*}
Here, for clarity of exposition, we consider the case of an infinite CMV matrix, which is associated with a non-trivial measure $\bmu$ over the whole space of polynomials (and, in turn, of Laurent polynomials). If the inner product is nondegenerate only on $\poly^s_d$ for a certain $d$, the CMV matrix will be finite. 

The block CMV matrix can be represented $\mathcal{C} = \mathcal{L}\mathcal{M}$, where
\begin{equation*}
  \mathcal{L} = \begin{bmatrix}
    \malpha_1^*&\mrho^L_1\\
    \mrho^R_1&-\malpha_1\\
    &&\malpha_3^*&\mrho^L_3\\
    &&\mrho^R_3&-\malpha_3\\
    &&&&\ddots
  \end{bmatrix},\quad
  \mathcal{M} = \begin{bmatrix}
    I_{s}\\
    &\malpha_2^*&\mrho^L_2\\
    &\mrho^R_2&-\malpha_2\\
    &&&\malpha_4^*&\mrho^L_4\\
    &&&\mrho^R_4&-\malpha_4\\
    &&&&&\ddots
  \end{bmatrix}.
\end{equation*}
Hence, the blocks of $\mathcal{C}$ are completely determined by the Verblunsky coefficients $\{ \malpha_k \}_{k\geq 0}$. Furthermore, most of the CMV blocks are zero. In fact, since $\mathcal{C} = \mathcal{L}\mathcal{M}$,
\begin{equation}
  \label{eq:CMV-block-structure}
  \mathcal{C} = \begin{bmatrix}
    A_0 & B_0\\
    &A_1&B_1\\
    &&A_2&B_2\\
    &&&\ddots&\ddots
  \end{bmatrix}
  = 
  \begin{bmatrix}
    \malpha_1^* & \mrho^L_1\malpha_2^* & \mrho^L_1\mrho^L_2\\
    \mrho^R_1 & -\malpha_1\malpha_2^* & -\malpha_1\mrho^L_2\\
    & \malpha_3^* \mrho^R_2 & -\malpha_3^*\malpha_2 & \mrho^L_3\malpha_4^* & \mrho^L_3 \mrho^L_4\\
    & \mrho^R_3 \mrho^R_2 & -\mrho^R_3 \malpha_2 & -\malpha_3 \malpha_4^* & -\malpha_3 \mrho^L_4\\
    &&& \malpha_5^* \mrho^R_4 & -\malpha_5^*  \malpha_4& \cdots\\
    &&&\vdots&\vdots&\ddots
  \end{bmatrix}
  ,
\end{equation}
where, for $k\geq 1$,
\begin{equation}
  \label{eq:formulas-CMV-blocks-A-B}
  \begin{aligned}
    A_0 
    = 
    \begin{bmatrix}
      \mathcal{C}_{1,1}
      \\ 
      \mathcal{C}_{2,1}
    \end{bmatrix}
    =
    \begin{bmatrix}
      \malpha_1^* \\ \mrho^R_1
    \end{bmatrix},& 
    \qquad 
    A_k 
    = 
    \begin{bmatrix}
    \mathcal{C}_{2k+1,2k} & \mathcal{C}_{2k+1,2k+1}\\
    \mathcal{C}_{2k+2,2k} & \mathcal{C}_{2k+2,2k+1}
    \end{bmatrix}
    =
    \begin{bmatrix}
      \malpha_{2k+1}^* \mrho^R_{2k} & -\malpha_{2k+1}^*\malpha_{2k}\\
      \mrho^R_{2k+1} \mrho^R_{2k} & -\mrho^R_{2k+1}\malpha_{2k}
    \end{bmatrix},\\
    B_{k-1} 
    =& 
    \begin{bmatrix}
      \mathcal{C}_{2k-1,2k} & \mathcal{C}_{2k-1,2k+1}\\
      \mathcal{C}_{2k,2k} & \mathcal{C}_{2k,2k+1}
    \end{bmatrix}
    =
    \begin{bmatrix}
      \mrho^L_{2k-1} \malpha_{2k}^* & \mrho^L_{2k-1} \mrho^L_{2k}\\
      -\malpha_{2k-1}\malpha_{2k}^* & -\malpha_{2k-1}\mrho^L_{2k}
    \end{bmatrix}.
  \end{aligned}
\end{equation}
Thus, in~\cref{eq:CMV-recurrence-functional} and~\cref{eq:CMV-recurrence-functional-transposed} we actually have two short recurrence relations involving $z\chi_k(z)$ and $z^{-1}\chi_k(z)$, respectively.

\subsection{CMV structure and extended block Krylov subspaces}

We first clarify the finite-degree issue. If an inner product supported on the unit circle is nondegenerate on $\poly^s_d$, then the finite right and left OMPUC $\varphi^R_0,\dots,\varphi^R_d$ and $\varphi^L_0,\dots,\varphi^L_d$ are well defined, together with the Verblunsky coefficients $\malpha_1,\dots,\malpha_d$. The Szeg\H{o} recurrence is then valid for $k=0,\dots,d-1$, so all finite formulas involving only these coefficients are intrinsic to the original inner product. In the Krylov setting the induced inner product is always degenerate on the full space $\poly^s$; see~\cref{rem:discrete-inner-product-degenerate}. In order to use the standard infinite CMV notation and sparsity pattern without truncating every formula separately, we prove the following result.
\begin{proposition}
  \label{prop:infinite-extension-OMPUC}
  Let $\bmu$ be a Hermitian matrix-valued measure supported on $\mathbb{T}$ such that the inner product $\llangle\cdot,\cdot\rrangle_{\bmu}$ is nondegenerate on $\poly^s_{d}$, for a positive integer $d$. Then, there exists a measure $\widetilde{\bmu}$ on $\mathbb{T}$ such that $\llangle\cdot,\cdot\rrangle_{\widetilde{\bmu}}$ is nondegenerate over $\poly^s$ and
  \begin{equation}
    \label{eq:extended-inner-product-unit-circle}
    \llangle P,Q\rrangle_{\bmu} 
    = 
    \llangle P,Q \rrangle_{\widetilde{\bmu}}
  \end{equation}
  for all $P,Q\in\poly^s_{d}$.
\end{proposition}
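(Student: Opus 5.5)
Two routes are available: the Verblunsky theorem, or a direct perturbation of the measure. The cleanest plan uses Verblunsky's Theorem~\cite[Theorem 3.12]{DaPuSi08}, with a direct construction as a fallback.

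Since $\llangle\cdot,\cdot\rrangle_{\bmu}$ is nondegenerate on $\poly^s_d$, the finite right and left OMPUC $\varphi^R_0,\dots,\varphi^R_d$, $\varphi^L_0,\dots,\varphi^L_d$ exist. The Szeg\H{o} recurrence~\cref{eq:OMPUC-szego-recurrence} holds for $k=0,\dots,d-1$, by the remark following~\cref{eq:verblunsky-formula-OMPUC}. It produces Verblunsky coefficients $\malpha_1,\dots,\malpha_d$ with $\|\malpha_k\|_2<1$. Extend this sequence arbitrarily to an infinite one, for instance $\malpha_k=\vec 0$ for $k>d$. By Verblunsky's Theorem, there is a unique nontrivial matrix measure $\widetilde{\bmu}$ on $\T$, with $\llangle\cdot,\cdot\rrangle_{\widetilde{\bmu}}$ nondegenerate on all of $\poly^s$, whose Verblunsky coefficients are this sequence.

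Next, the OMPUC of $\widetilde{\bmu}$ up to degree $d$ coincide with those of $\bmu$. Both families start from $\varphi^R_0$ and $\varphi^L_0$, which are determined by the normalized total mass. Both are then generated by the same recurrence with the same $\malpha_1,\dots,\malpha_d$ (and hence the same $\mrho$'s). Two facts follow.

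\emph{Normalization.} The total masses agree: $\widetilde{\bmu}(\T)=\bmu(\T)$ is the Gram matrix of the constants. With the standard normalization $\bmu(\T)=I$ this is automatic; otherwise I would rescale by $\bmu(\T)^{1/2}$ on both sides.

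\emph{Gram matrices.} The polynomials $\varphi^R_0,\dots,\varphi^R_d$ are a right basis of $\poly^s_d$, since $\varphi^R_k$ has degree exactly $k$ with invertible leading coefficient. Every $P,Q\in\poly^s_d$ can therefore be written as $P=\sum_k\varphi^R_kC_k$ and $Q=\sum_k\varphi^R_kD_k$. Sesquilinearity then gives $\llangle P,Q\rrangle=\sum_k C_k^*D_k$ for both measures, which is~\cref{eq:extended-inner-product-unit-circle}.

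The main obstacle is the normalization issue together with the exact convention in Verblunsky's Theorem. The theorem in~\cite{DaPuSi08} is usually stated for normalized measures, so I would reduce to $\bmu(\T)=I$ by replacing $\bmu$ with $\bmu(\T)^{-1/2}\bmu\,\bmu(\T)^{-1/2}$. This is legitimate because $\bmu(\T)=\llangle I,I\rrangle_{\bmu}$ is positive definite by nondegeneracy. I would undo the scaling at the end, which preserves both nondegeneracy and the identity.

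A self-contained alternative avoids the theorem altogether: set $\widetilde{\bmu}=\bmu+\varepsilon\,\de\theta\,I$. This is nondegenerate on all of $\poly^s$, since the Lebesgue part alone is. However, it changes the inner product on $\poly^s_d$, so it would only give the result approximately. For that reason I would use the Verblunsky route.
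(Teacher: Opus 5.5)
Your proposal is correct and follows the paper's proof. Both take the Verblunsky coefficients $\malpha_1,\dots,\malpha_d$ of $\bmu$ and extend them arbitrarily (your choice $\vec 0$ is one instance). Both then invoke Verblunsky's Theorem, note that the Szeg\H{o} recurrence yields the same $\varphi^R_0,\dots,\varphi^R_d$ for both measures, and expand $P,Q$ in that basis. Your reduction to $\bmu(\T)=I$ via $\bmu(\T)^{-1/2}\bmu\,\bmu(\T)^{-1/2}$ is valid and fills in a normalization detail the paper leaves implicit; in the paper's Krylov application $\bmu(\T)=\B^*\B=I_s$, so it is not needed there.
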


\begin{proof}
  Since $\llangle \cdot,\cdot\rrangle_{\bmu}$ is supported on the unit circle and nondegenerate over $\poly^s_d$, there exist finite sequences of right OMPUC $\varphi^R_0,\dots,\varphi^R_d$ and left OMPUC $\varphi^L_0,\dots,\varphi^L_d$. The corresponding Szeg\H{o} recurrence holds for $k=0,\dots,d-1$, with Verblunsky coefficients $\malpha_k$ given by~\cref{eq:verblunsky-formula-OMPUC} for $k=1,\dots,d$.
  By Verblunsky's Theorem~\cite[Theorem 3.12]{DaPuSi08}, infinite sequences of Verblunsky coefficients are in one-to-one correspondence with nondegenerate matrix-valued measures on $\mathbb{T}$. Hence, we define $\{\widetilde{\malpha}_k\}_{k\geq 1}$ by setting $\widetilde{\malpha}_k=\malpha_k$ for $k=1,\dots,d$, and by choosing arbitrary matrices satisfying $\|\widetilde{\malpha}_k\|_2<1$ for $k\geq d+1$. We then let $\widetilde{\bmu}$ be the matrix-valued measure associated with $\{\widetilde{\malpha}_k\}_{k\geq 1}$.

  We now show that~\cref{eq:extended-inner-product-unit-circle} holds. Since the first $d$ Verblunsky coefficients coincide, then $\varphi^R_0,\dots,\varphi^R_d$ are OMPUC for both inner products. Therefore, for any $P,Q\in\poly^s_d$, we can write
  \begin{equation*}
      P(z) = \sum_{k=0}^{d} \varphi^R_k C_k,
      \qquad 
      Q(z) = \sum_{k=0}^{d} \varphi^R_k D_k,
  \end{equation*} 
  for $C_k,D_k\in\C^{s\times s}$. Using orthonormality with respect to both inner products gives
  \begin{equation*}
    \llangle P,Q\rrangle_{\bmu} = \sum_{k=0}^{d} C_k^* D_k = \llangle P,Q\rrangle_{\widetilde{\bmu}}.
  \end{equation*}
  This concludes the proof.
\end{proof}

In the Krylov application below, we use~\cref{prop:infinite-extension-OMPUC} with $d=m-1$, since full dimension of the relevant Krylov space gives nondegeneracy on the polynomial space of degree at most $m-1$. This allows us to use the standard infinite CMV notation and block formulas, while only retaining the finite initial cutoff needed in the Krylov construction.

We now apply the theory of CMV bases and matrices to the orthogonalization of extended block Krylov subspaces $\ebkryl_m(A,\B)$. Note that, since $A$ is unitary, the negative powers appearing in~\cref{eq:extended-block-krylov-blockspan} can be replaced by powers of $A^*$.

From~\cref{thm:isomorphism-extended-laurent} the inner product $\llangle\cdot,\cdot\rrangle_{A,\B}$ is nondegenerate over $\poly^s_{m-1}$ if and only if $\ebkryl_{m}(A,\B)$ has full dimension, in which case orthonormal bases are in correspondence. We consider the CMV basis $\{\chi_k\}_{k\geq 0}$ associated with $\llangle\cdot,\cdot\rrangle_{A,\B}$, and define the block vectors
\begin{equation*}
  \W_k = \chi_{k-1}(A)\circ\B,\quad k=1,\dots,m.
\end{equation*}
Because of the isometry, the $\W_k$ are orthonormal and form a basis of $\ebkryl_m(A,\B)$. Moreover, from~\cref{eq:CMV-blocks-laurent-basis}, the CMV blocks coincide with
\begin{equation}
  \label{eq:CMV-blocks-krylov-basis}
  \mathcal{C}_{i,j} = \W_i^* A \W_j.
\end{equation}

We now exploit the sparsity pattern in the CMV matrix to derive recurrence relations for the sequence $\{\W_k\}_{k\geq 1}$ and an orthogonalization procedure for $\ebkryl_{m}(A,\B)$. Since $(z\chi_{k-1})(A)\circ\B = A\W_k$ and $(z^{-1}\chi_{k-1})(A)\circ\B = A^*\W_k$, the relations~\cref{eq:CMV-recurrence-functional} and~\cref{eq:CMV-recurrence-functional-transposed} are transformed as follows:
\begin{align}
  \label{eq:recurrence-w1-A}
  A\W_1 &= \W_1\mathcal{C}_{1,1} + \W_2 \mathcal{C}_{2,1},\\
  \label{eq:recurrence-w1-A*}
  A^* \W_1 &= \W_1 \mathcal{C}_{1,1}^* + \W_2 \mathcal{C}_{1,2}^* + \W_3 \mathcal{C}_{1,3}^*,\\
  A^* \W_2 &= \W_1 \mathcal{C}_{2,1}^* + \W_2 \mathcal{C}_{2,2}^* + \W_3 \mathcal{C}_{2,3}^*,
\end{align}
and
{
\small
\begin{align}
  \label{eq:CMV-recursion-odd-transpose}
  A^* \W_{2k-1} &= \W_{2k-2} \mathcal{C}_{2k-1,2k-2}^* + \W_{2k-1} \mathcal{C}_{2k-1,2k-1}^* + \W_{2k} \mathcal{C}_{2k-1,2k}^* + \W_{2k+1} \mathcal{C}_{2k-1,2k+1}^*,\\
  \label{eq:CMV-recursion-even-transpose}
  A^* \W_{2k} &= \W_{2k-2} \mathcal{C}_{2k,2k-2}^* + \W_{2k-1} \mathcal{C}_{2k,2k-1}^* + \W_{2k} \mathcal{C}_{2k,2k}^* + \W_{2k+1} \mathcal{C}_{2k,2k+1}^*,\\
  \label{eq:CMV-recursion-even-regular}
  A\W_{2k} &= \W_{2k-1} \mathcal{C}_{2k-1,2k} + \W_{2k} \mathcal{C}_{2k,2k} + \W_{2k+1} \mathcal{C}_{2k+1,2k} + \W_{2k+2} \mathcal{C}_{2k+2,2k},\\
  \label{eq:CMV-recursion-odd-regular}
  A\W_{2k+1} &= \W_{2k-1} \mathcal{C}_{2k-1,2k+1} + \W_{2k} \mathcal{C}_{2k,2k+1} + \W_{2k+1} \mathcal{C}_{2k+1,2k+1} + \W_{2k+2} \mathcal{C}_{2k+2,2k+1}.
\end{align}
}
Note that the nonzero CMV blocks can be expressed in terms of the Verblunsky coefficients $\{\malpha_k\}_{k\geq1}$. We use the representation~\cref{eq:formulas-CMV-blocks-A-B}.
Since
\begin{equation*}
  \begin{bmatrix}
      \mathcal{C}_{1,1}
      \\ 
      \mathcal{C}_{2,1}
    \end{bmatrix}
    =
    \begin{bmatrix}
      \malpha_1^* \\ \mrho^R_1
    \end{bmatrix},
\end{equation*}
 $\mathcal{C}_{1,1}$ and $\mathcal{C}_{2,1}$ depend only on $\malpha_1$. From
\begin{equation}
  \label{eq:CMV-blocks-odd}
  \begin{bmatrix}
      \mathcal{C}_{2k-1,2k} & \mathcal{C}_{2k-1,2k+1}\\
      \mathcal{C}_{2k,2k} & \mathcal{C}_{2k,2k+1}
    \end{bmatrix}
    =
    \begin{bmatrix}
      \mrho^L_{2k-1} \malpha_{2k}^* & \mrho^L_{2k-1} \mrho^L_{2k}\\
      -\malpha_{2k-1}\malpha_{2k}^* & -\malpha_{2k-1}\mrho^L_{2k}
    \end{bmatrix},
\end{equation}
we get that $\mathcal{C}_{2k-1,2k}$, $\mathcal{C}_{2k-1,2k+1}$, $\mathcal{C}_{2k,2k}$, and $\mathcal{C}_{2k,2k+1}$ depend only on $\malpha_{2k-1}$ and $\malpha_{2k}$, and from
 \begin{equation}
  \label{eq:CMV-blocks-even}
  \begin{bmatrix}
    \mathcal{C}_{2k+1,2k} & \mathcal{C}_{2k+1,2k+1}\\
    \mathcal{C}_{2k+2,2k} & \mathcal{C}_{2k+2,2k+1}
    \end{bmatrix}
    =
    \begin{bmatrix}
      \malpha_{2k+1}^* \mrho^R_{2k} & -\malpha_{2k+1}^*\malpha_{2k}\\
      \mrho^R_{2k+1} \mrho^R_{2k} & -\mrho^R_{2k+1}\malpha_{2k}
    \end{bmatrix},
 \end{equation}
 we get that 
 $\mathcal{C}_{2k+1,2k}$, $\mathcal{C}_{2k+1,2k+1}$, $\mathcal{C}_{2k+2,2k}$, and $\mathcal{C}_{2k+2,2k+1}$ depend on $\malpha_{2k}$ and $\malpha_{2k+1}$.
Together, these identities give all nonzero CMV blocks.

\subsection{An orthogonalization procedure for $\ebkryl_m(A,\B)$}

We now use the formulas for the CMV blocks to obtain a procedure for the sequential computation of $\W_k$, $\mathcal{C}_{i,j}$, and $\malpha_k$. At each iteration, we want to perform just one multiplication with either $A$ or $A^*$.

\paragraph{Initialize} If $\B^*\B=I_s$, set $\W_1=\B$. Otherwise, since we assume that $\llangle\cdot,\cdot\rrangle_{A,\B}$ is nondegenerate on $\poly^{s}_{m-1}$, we can choose $\W_1$ so that $\B = \W_1R$, where $R\in\C^{s\times s}$ and $\W_1^*\W_1=I_{s}$.

\paragraph{First step} To get $\W_2$, we exploit~\cref{eq:recurrence-w1-A}, since~\cref{eq:recurrence-w1-A*} also involves $\W_3$. We then compute $\X_{2}=A\W_1$ and $\mathcal{C}_{1,1} = \W_1^* \X_2$. We cannot compute $\mathcal{C}_{2,1}$ via~\cref{eq:CMV-blocks-krylov-basis}, since $\W_2$ is not yet available. We instead retrieve $\malpha_1=\mathcal{C}_{1,1}^*$ and $\mrho^L_1$, $\mrho^R_1$ from~\cref{eq:rho-formula}, so we set $\mathcal{C}_{2,1} = \mrho^R_1$. Recall that $\mrho^R_k$ and $\mrho^L_k$ are always nonsingular; see~\cref{sec:unitary-block-Krylov}. Hence, we compute $\W_2$ from~\cref{eq:recurrence-w1-A}.

\paragraph{Next steps} To compute $\W_3,\W_4,\dots$ we follow the same idea: perform a multiplication with either $A$ or $A^*$, compute via an inner product a CMV block that allows to extract the next Verblunsky coefficient, then form the other CMV blocks needed for the recurrence relation. Because of the structure of the CMV matrix and of the recurrence relations~\crefrange{eq:CMV-recursion-odd-transpose}{eq:CMV-recursion-odd-regular}, it is convenient to distinguish between odd and even iterations.

\paragraph{Odd iteration} Assume that $\W_1,\dots,\W_{2k}$ and $\malpha_1,\dots,\malpha_{2k-1}$ are available from previous iterations, together with the corresponding CMV blocks. We can use either~\cref{eq:CMV-recursion-odd-transpose} or~\cref{eq:CMV-recursion-even-transpose} to compute $\W_{2k+1}$. We opt for~\cref{eq:CMV-recursion-odd-transpose}: the coefficient $\malpha_{2k}$ appears in $\mathcal{C}_{2k-1,2k} = \mrho^L_{2k-1}\malpha_{2k}^*$ and in $\mathcal{C}_{2k,2k}=-\malpha_{2k-1}\malpha_{2k}^*$. However, $\malpha_{2k-1}$ is not guaranteed to be nonsingular, so inverting $\mathcal{C}_{2k,2k}$ may not be possible. Thus, we compute $\X_{2k+1} = A^*\W_{2k-1}$, compute $\mathcal{C}_{2k-1,2k}= \X_{2k+1}^*\W_{2k}$ and $\malpha_{2k} = \mathcal{C}_{2k-1,2k}^*(\mrho^L_{2k-1})^{-1}$. At this point, we can form all the CMV blocks in~\cref{eq:CMV-blocks-odd}. Since $\mathcal{C}_{2k-1,2k-2}$ and $\mathcal{C}_{2k-1,2k-1}$ are available from previous iterations, we can use the relation~\cref{eq:CMV-recursion-odd-transpose} to compute $\W_{2k+1}$. Note that $\mathcal{C}_{2k-1,2k+1}=\mrho^L_{2k-1}\mrho^L_{2k}$ is nonsingular.

\paragraph{Even iteration} Assume that we have $\W_1,\dots,\W_{2k+1}$ and $\malpha_1,\dots,\malpha_{2k}$, and we want to compute $\W_{2k+2}$. Similarly as before, we choose~\cref{eq:CMV-recursion-even-regular} instead of~\cref{eq:CMV-recursion-odd-regular}, since $\malpha_{2k+1}$ can be extracted from $\mathcal{C}_{2k+1,2k} = \malpha_{2k+1}^*\mrho^R_{2k}$, and $\mathcal{C}_{2k+2,2k}=\mrho^R_{2k+1}\mrho^R_{2k}$ is necessarily nonsingular. Hence, we compute $\X_{2k+2} = A\W_{2k}$, $\mathcal{C}_{2k+1,2k}=\W_{2k+1}^*\X_{2k+2}$, and $\malpha_{2k+1}=(\mrho^R_{2k})^{-1} \mathcal{C}_{2k+1,2k}^*$. Then we form all the CMV blocks in~\cref{eq:CMV-blocks-even}, and use the relation~\cref{eq:CMV-recursion-even-regular} to compute $\W_{2k+2}$.

\begin{algorithm}[ht]
\caption{CMV based orthogonalization of extended block Krylov subspaces}
\label{alg:block-CMV-arnoldi}
\begin{algorithmic}[1]
\REQUIRE Unitary $A\in\C^{n\times n}$, starting block vector $\B\in\C^{n\times s}$, $\B^*\B=I_{s}$, number of steps $m$
\ENSURE Orthonormal basis $\vec W_1,\dots,\vec W_{m}$ of $\ebkryl_{m}(A,\B)$, CMV blocks $\mathcal{C}_{i,j}$, and Verblunsky coefficients $\malpha_1,\dots,\malpha_{m-1}$.

\STATE \textbf{Initialize.} Set $\W_0=\boldsymbol 0$, $\mathcal{C}_{1,0}= \vec 0$, $\W_1 = \B$
\STATE \textbf{Multiplication with $A$.} Compute $\X_2 = A\W_1$ and $\mathcal{C}_{1,1} = \W_1^*\X_2$ 
\STATE Set $\malpha_1 = \mathcal{C}_{1,1}^*$, form $\mrho^R_1 = (I_{s}-\malpha_1\malpha_1^*)^{\frac{1}{2}}$, $\mrho^L_1 = (I_{s}-\malpha_1^*\malpha_1)^{\frac{1}{2}}$, $\mathcal{C}_{2,1} = \mrho^R_1$
\STATE Compute $\Y_2 = \X_2 - \W_1 \mathcal{C}_{1,1}$ and $\W_2 = \Y_2 \mathcal{C}_{2,1}^{-1}$
\FOR{$j=3,4,\dots,m$}
  \IF{$j=2k+1$}
    \STATE \textbf{Multiplication with $A^*$.} Compute $\X_{2k+1} = A^*\W_{2k-1}$
    \STATE Compute $\mathcal{C}_{2k-1,2k} = \X_{2k+1}^* \W_{2k}$, form $\malpha_{2k} = \mathcal{C}_{2k-1,2k}^* (\mrho^L_{2k-1})^{-1}$,\\ $\mrho^R_{2k} = (I_{s}-\malpha_{2k}\malpha_{2k}^*)^{\frac{1}{2}}$, $\mrho^L_{2k} = (I_{s}-\malpha_{2k}^*\malpha_{2k})^{\frac{1}{2}}$
    \STATE Form $\mathcal{C}_{2k-1,2k+1} = \mrho^L_{2k-1} \mrho^L_{2k}$, $\mathcal{C}_{2k,2k} = -\malpha_{2k-1}\malpha_{2k}^*$, $\mathcal{C}_{2k,2k+1} = -\malpha_{2k-1}\mrho^L_{2k}$
    \STATE Compute $\Y_{2k+1} = \X_{2k+1} - \W_{2k-2} \mathcal{C}_{2k-1,2k-2}^* - \W_{2k-1} \mathcal{C}_{2k-1,2k-1}^* - \W_{2k} \mathcal{C}_{2k-1,2k}^*$\\
    and $\W_{2k+1} =\Y_{2k+1}(\mathcal{C}_{2k-1,2k+1}^*)^{-1}$
    \label{line:linear-combinations-odd}
    \ENDIF
  \IF{$j=2k+2$}
    \STATE \textbf{Multiplication with $A$.} $\X_{2k+2} = A \W_{2k}$ and $\mathcal{C}_{2k+1,2k} = \W_{2k+1}^*\X_{2k+2}$
    \STATE $\malpha_{2k+1} = (\mrho^R_{2k})^{-1}\mathcal{C}_{2k+1,2k}^*$, $\mrho^R_{2k+1} = (I_{s} - \malpha_{2k+1}\malpha_{2k+1}^*)^{\frac{1}{2}}$, $\mrho^L_{2k+1} = (I_{s} - \malpha_{2k+1}^*\malpha_{2k+1})^{\frac{1}{2}}$
    \STATE Form $\mathcal{C}_{2k+1,2k+1} = -\malpha_{2k+1}^*\malpha_{2k}$, $\mathcal{C}_{2k+2,2k} = \mrho^R_{2k+1} \mrho^R_{2k}$, $\mathcal{C}_{2k+2,2k+1} = -\mrho^R_{2k+1}\malpha_{2k}$
    \STATE Compute $\Y_{2k+2} = \X_{2k+2} - \W_{2k-1} \mathcal{C}_{2k-1,2k} - \W_{2k} \mathcal{C}_{2k,2k} - \W_{2k+1} \mathcal{C}_{2k+1,2k}$\\ and $\W_{2k+2} = \Y_{2k+2}\mathcal{C}_{2k+2,2k}^{-1}$
    \label{line:linear-combinations-even}
  \ENDIF
\ENDFOR
\end{algorithmic}
\end{algorithm}
The procedure is summarized in~\cref{alg:block-CMV-arnoldi}.
We now analyze the computational cost. At each iteration, we perform a matrix-block-vector multiplication with $A$ or $A^*$, whose cost depends on the structure of $A$. We perform one matrix-valued Euclidean inner product, which costs $O(s^2n)$. At lines~\ref{line:linear-combinations-odd} and~\ref{line:linear-combinations-even}, we perform a linear combination of block vectors, which again costs $O(s^2n)$.

\cref{alg:block-CMV-arnoldi} returns the basis $\W_1,\dots,\W_m$, the Verblunsky coefficients $\malpha_1,\dots,\malpha_{m-1}$, and the CMV blocks $\mathcal{C}_{i,j}$. By~\cref{eq:CMV-blocks-krylov-basis}, the orthogonal projection of $A$ onto $\ebkryl_m(A,\B)$ associated with this basis is the \emph{cutoff block CMV matrix}\footnote{Cutoff CMV matrices are widely studied for their connections with the roots of orthogonal polynomials and matrix polynomials on the unit circle; see, e.g.,~\cite{DaPuSi08}. }
\begin{equation*}
  [\W_1,\dots,\W_m]^*\, A \,[\W_1,\dots,\W_m] = \begin{bmatrix}
    \mathcal{C}_{1,1} & \mathcal{C}_{1,2} & \cdots & \mathcal{C}_{1,m}\\
    \mathcal{C}_{2,1} & \mathcal{C}_{2,2} & \cdots & \mathcal{C}_{2,m}\\
    \vdots&\vdots &\ddots & \vdots\\
    \mathcal{C}_{m,1} & \mathcal{C}_{m,2} & \cdots & \mathcal{C}_{m,m}
  \end{bmatrix}=\mathcal{C}_m.
\end{equation*}
 However, not every block required for $\mathcal{C}_m$ is available after the execution of~\cref{alg:block-CMV-arnoldi}, since the Verblunsky coefficient $\malpha_m$ has not been computed. The missing blocks are $\mathcal{C}_{m-1,m}$ and $\mathcal{C}_{m,m}$ when $m$ is odd, and $\mathcal{C}_{m,m-1}$ and $\mathcal{C}_{m,m}$ when $m$ is even. We can retrieve $\malpha_m$ and the missing blocks through an additional multiplication with $A$ or $A^*$, and the evaluation of two inner products.

 The projected matrices described here and in~\cref{sec:unitary-block-Krylov} contain the same spectral information.
\begin{proposition}
\label{prop:polynomial-extended-projected-similarity}
Let $A$ be unitary, and let
\begin{equation*}
    \mathcal{V}_m = [\V_1,\dots,\V_m],
    \quad 
    \mathcal{W}_m = [\W_1,\dots,\W_m],
\end{equation*}
be the block orthonormal bases of $\bkryl_m(A,\B)$ and $\ebkryl_m(A,\B)$, generated by~\cref{alg:block-isometric-arnoldi} and~\cref{alg:block-CMV-arnoldi}, respectively, and let
\begin{equation*}
    \mathcal{H}_m = \mathcal{V}_m^*A\mathcal{V}_m,
    \quad
    \mathcal{C}_m = \mathcal{W}_m^*A\mathcal{W}_m.
\end{equation*}
Then, there is a unitary $S_m\in\C^{ms\times ms}$ such that
\begin{equation*}
    \mathcal{C}_m = S_m^* \mathcal{H}_m S_m.
\end{equation*}
\end{proposition}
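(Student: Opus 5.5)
The plan is to show that $\mathcal{V}_m$ and $\mathcal{W}_m$ are two orthonormal bases of the \emph{same} $\C$-subspace of $\C^n$, and then take $S_m=\mathcal{V}_m^*\mathcal{W}_m$. The subspaces $\bkryl_m(A,\B)$ and $\ebkryl_m(A,\B)$ are different sets of block vectors, so the common subspace has to be identified first. The natural candidate is the column space. By~\cref{eq:extended-polynomial-block-krylov-correspondence}, $\ebkryl_m(A,\B)=A^{-\ell}\bkryl_m(A,\B)$ with $\ell=\lfloor (m-1)/2\rfloor$. On its own this only says that the column spaces of $\mathcal W_m$ and $A^{-\ell}\mathcal V_m$ coincide, which is not the statement we need. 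So the first step will use the unitarity of $A$ in a different way.

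\textbf{Step 1: unify the bases.} Let $\mathcal{V}_m=[\V_1,\dots,\V_m]$, an $n\times ms$ matrix with orthonormal columns. Its column space is $\mathrm{range}[\B,A\B,\dots,A^{m-1}\B]$. Likewise, the column space of $\mathcal{W}_m$ is $\mathrm{range}[\B,A\B,A^{-1}\B,\dots]$. These two ranges are not equal in general; they are related only through multiplication by $A^{\ell}$. Hence I will not aim for $\mathcal{C}_m=S_m^*\mathcal{H}_mS_m$ via equality of spans. Instead I will use $A^{\ell}\mathcal{W}_m$, whose columns are orthonormal because $A$ is unitary and which spans the same space as $\mathcal{V}_m$ by~\cref{eq:extended-polynomial-block-krylov-correspondence}. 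I then set $S_m=\mathcal{V}_m^*A^{\ell}\mathcal{W}_m$. Both $\mathcal V_m$ and $A^\ell\mathcal W_m$ have $ms$ orthonormal columns spanning the same $ms$-dimensional space, which is guaranteed by the nondeflation assumption implicit in the algorithms and~\cref{thm:isomorphism-extended-laurent}. Therefore $S_m$ is unitary and $A^{\ell}\mathcal{W}_m=\mathcal{V}_mS_m$.

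\textbf{Step 2: conjugate.} Using the commutation of $A$ with $A^{\ell}$ and the unitarity of $A^\ell$,
\[
\mathcal{C}_m=\mathcal{W}_m^*A\mathcal{W}_m=(A^{\ell}\mathcal{W}_m)^*A(A^{\ell}\mathcal{W}_m)=S_m^*\mathcal{V}_m^*A\mathcal{V}_mS_m=S_m^*\mathcal{H}_mS_m.
\]
This step also uses the identity $\mathcal{V}_m\mathcal{V}_m^*\,(A^{\ell}\mathcal W_m)=A^{\ell}\mathcal W_m$, which follows from the equality of column spaces and has already been absorbed into the relation $A^{\ell}\mathcal{W}_m=\mathcal{V}_mS_m$.

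\textbf{Main obstacle.} The delicate point is Step 1, namely recognizing that the two Krylov spaces differ and that the shift by $A^{\ell}$ is needed. One could worry that shifting alters the projected matrix. It does not, because $A^\ell$ commutes with $A$ and is unitary. A secondary check is that \cref{eq:extended-polynomial-block-krylov-correspondence} gives equality of the $\C$-column spaces, and not just of the block spans. This holds because $\blockspan$ of block vectors is exactly the set of block vectors whose columns lie in the column span of the generators. Finally, $\mathcal{H}_m$ and $\mathcal{C}_m$ should be understood as the full projections $\mathcal V_m^*A\mathcal V_m$ and $\mathcal W_m^*A\mathcal W_m$. This includes the blocks depending on $\malpha_m$, which the paper notes can be obtained with one extra product, so that no truncation issue arises.
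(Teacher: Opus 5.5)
Your proof is correct and follows the paper's argument: the paper also sets $\mathcal Z_m=A^\ell\mathcal W_m$, uses $\ebkryl_m(A,\B)=A^{-\ell}\bkryl_m(A,\B)$ to see that its block columns form an orthonormal basis of $\bkryl_m(A,\B)$, takes $S_m=\mathcal V_m^*\mathcal Z_m$, and conjugates using that $A^\ell$ is unitary and commutes with $A$. Your additional checks, namely that block spans and column spaces match and why $S_m$ is unitary, simply make explicit what the paper leaves implicit.
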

\begin{proof}
Let $\ell=\lfloor (m-1)/2\rfloor$ and set
\begin{equation*}
    \mathcal Z_m=A^\ell \mathcal W_m .
\end{equation*}
By~\cref{eq:extended-polynomial-block-krylov-correspondence}, the block columns of $\mathcal Z_m$ form an orthonormal basis of $\bkryl_m(A,\B)$. Hence there is a unitary matrix $S_m\in\C^{ms\times ms}$ such that
\begin{equation*}
    \mathcal Z_m=\mathcal V_m S_m,
    \qquad S_m=\mathcal V_m^*\mathcal Z_m .
\end{equation*}
Since $A$ is unitary and commutes with its powers,
\begin{equation*}
    \mathcal C_m
    =\mathcal W_m^*A\mathcal W_m
    =\mathcal Z_m^*A\mathcal Z_m 
    = S_m^*\mathcal V_m^*A\mathcal V_m S_m
    = S_m^*\mathcal H_m S_m.
\end{equation*}
This concludes the proof.
\end{proof}

\section{Numerical experiments}
\label{sec:numerical-experiments}

In this section we test the orthogonalization procedures derived in \cref{alg:block-isometric-arnoldi} and \cref{alg:block-CMV-arnoldi}. We first check the correctness and efficiency of the algorithms, and then test the methods for computing eigenvalues of multiplicity higher than one, which is a classical application of block Krylov methods~\cite{BK-multiple-eigenvalues}. All numerical experiments are performed in MATLAB on a laptop equipped with an AMD Ryzen 7 PRO 7840U processor and 16 GB of RAM. The code used in the experiments is fully available at~\url{https://numa.cs.kuleuven.be/software/momentum-software}.

\subsection{\texorpdfstring{Efficient orthogonalization}{Efficient orthogonalization}}

Let $\mathcal X_m=[\vec X_1,\dots,\vec X_m]$ denote the block basis produced by one of the algorithms, and let $\mathcal T_m$ denote the corresponding recurrence matrix. For standard block Arnoldi this is the block Hessenberg matrix produced by the algorithm; for block isometric Arnoldi it is the block Hessenberg matrix obtained from the Schur parametrization in~\cref{eq:schur-hessenberg-blocks}; for block CMV Arnoldi it is the cutoff block CMV matrix. We measure the loss of orthogonality by
\begin{equation}
\label{eq:numerical-orthogonality-error}
    \varepsilon_{\mathrm{orth}}
    = \|\mathcal X_m^*\mathcal X_m-I_{ms}\|_F,
\end{equation}
and the accuracy of the projection by
\begin{equation}
\label{eq:numerical-projection-error}
    \varepsilon_{\mathrm{proj}}
    =
    \|\mathcal X_m^*A\mathcal X_m-\mathcal T_m\|_F.
\end{equation}

For both the block isometric Arnoldi and block CMV Arnoldi cases, forming $\mathcal T_m$ requires the coefficient $\malpha_m$. Thus, for this accuracy check only, we compute $\malpha_m$ by one additional multiplication and one additional block inner product. This extra step is not included in the timing experiment. 

The reference method for the polynomial space $\bkryl_m(A,\B)$ is the standard block Arnoldi process in~\cref{alg:block-arnoldi}. For the extended space $\ebkryl_m(A,\B)$, the reference method is block extended Arnoldi; see~\cite{extended-block-arnoldi}.

In this experiment we take $n=100\,000$ and $s=10$, and vary the number of steps $m$. The test matrix is a sparse Floquet matrix of size $n$. More precisely, for a sequence $\alpha_1,\dots,\alpha_n$ with $|\alpha_j|<1$ we set $\rho_j=(1-|\alpha_j|^2)^{1/2}$ and
\begin{equation*}
\Theta(\alpha_j)=
\begin{bmatrix}
\overline{\alpha_j} & \rho_j\\
\rho_j & -\alpha_j
\end{bmatrix}.
\end{equation*}
The unitary matrix $A$ is then given by $A=F_1F_2$, where
\begin{equation*}
F_1=
\begin{bmatrix}
\Theta(\alpha_1)\\
&\Theta(\alpha_3)\\
&&\ddots\\
&&&\Theta(\alpha_{n-1})
\end{bmatrix},
\qquad
F_2=
\begin{bmatrix}
\overline{\alpha_n}&&&&\rho_n\\
&\Theta(\alpha_2)\\
&&\ddots\\
&&&\Theta(\alpha_{n-2})\\
\rho_n&&&&-\alpha_n
\end{bmatrix}.
\end{equation*}
The eigenvalues of $A$ correspond to the zeros of Szeg\H{o} transfer-matrix discriminants, which in turn are related to Lee-Yang zeros for one-dimensional ferromagnetic Ising models; see~\cite{EmbreeFloquet24}. For our tests, the parameters $\alpha_j$ are sampled randomly in the open unit disk. Both factors are sparse, and so is their product. Hence, a multiplication of $A$ or $A^*$ by an $n\times s$ block vector costs $O(ns)$. The timings in~\cref{fig:efficient-orthogonalization-timing} include only the construction of the basis vectors. In particular, they do not include the explicit formation of the recurrence matrix. The corresponding orthogonality and projection errors are shown in~\cref{fig:efficient-orthogonalization-accuracy}.

\begin{figure}[ht]
\centering
\begin{tikzpicture}
\node[draw, fill=white, inner xsep=4pt, inner ysep=2pt] {\scriptsize
\begin{tabular}{@{}c@{\ }l@{\quad}c@{\ }l@{\quad}c@{\ }l@{\quad}c@{\ }l@{}}
\tikz[baseline=-0.6ex]{\draw[blue, thick] (0,0)--(0.45,0); \draw[blue, thick, fill=white] (0.225,0) circle[radius=1.4pt];} & block Arnoldi &
\tikz[baseline=-0.6ex]{\draw[orange!80!black, thick] (0,0)--(0.45,0); \draw[orange!80!black, thick, fill=white] (0.18,-0.045) rectangle (0.27,0.045);} & block isometric Arnoldi &
\tikz[baseline=-0.6ex]{\draw[green!60!black, thick] (0,0)--(0.45,0); \draw[green!60!black, thick, fill=white] (0.225,0.05)--(0.18,-0.04)--(0.27,-0.04)--cycle;} & block extended Arnoldi &
\tikz[baseline=-0.6ex]{\draw[red!75!black, thick] (0,0)--(0.45,0); \draw[red!75!black, thick, fill=white] (0.225,0.055)--(0.275,0)--(0.225,-0.055)--(0.175,0)--cycle;} & block CMV Arnoldi
\end{tabular}};
\end{tikzpicture}
\vspace{0.4em}

\begin{tikzpicture}
\begin{semilogyaxis}[
    width=0.86\textwidth,
    height=0.43\textwidth,
    grid=both,
    xlabel={$m$},
    ylabel={time for basis construction (s)},
    tick label style={font=\footnotesize},
    label style={font=\small},
]
\addplot+[blue, mark=o, mark options={solid, fill=white}, thick] table[x=m,y=time_arnoldi] {efficient_orthogonalization_partial.dat};
\addplot+[orange!80!black, mark=square, mark options={solid, fill=white}, thick] table[x=m,y=time_isometric] {efficient_orthogonalization_partial.dat};
\addplot+[green!60!black, mark=triangle, mark options={solid, fill=white}, thick] table[x=m,y=time_laurent_gs] {efficient_orthogonalization_partial.dat};
\addplot+[red!75!black, mark=diamond, mark options={solid, fill=white}, thick] table[x=m,y=time_cmv] {efficient_orthogonalization_partial.dat};
\end{semilogyaxis}
\end{tikzpicture}
\caption{CPU time for constructing the basis vectors, as a function of the number of steps $m$. The block size is $s=10$ and the matrix size is $n=100\,000$.}
\label{fig:efficient-orthogonalization-timing}
\end{figure}

\begin{figure}[ht]
\centering
\begin{tikzpicture}
\node[draw, fill=white, inner xsep=4pt, inner ysep=2pt] {\scriptsize
\begin{tabular}{@{}c@{\ }l@{\quad}c@{\ }l@{\quad}c@{\ }l@{\quad}c@{\ }l@{}}
\tikz[baseline=-0.6ex]{\draw[blue, thick] (0,0)--(0.45,0); \draw[blue, thick, fill=white] (0.225,0) circle[radius=1.4pt];} & block Arnoldi &
\tikz[baseline=-0.6ex]{\draw[orange!80!black, thick] (0,0)--(0.45,0); \draw[orange!80!black, thick, fill=white] (0.18,-0.045) rectangle (0.27,0.045);} & block isometric Arnoldi &
\tikz[baseline=-0.6ex]{\draw[green!60!black, thick] (0,0)--(0.45,0); \draw[green!60!black, thick, fill=white] (0.225,0.05)--(0.18,-0.04)--(0.27,-0.04)--cycle;} & block extended Arnoldi &
\tikz[baseline=-0.6ex]{\draw[red!75!black, thick] (0,0)--(0.45,0); \draw[red!75!black, thick, fill=white] (0.225,0.055)--(0.275,0)--(0.225,-0.055)--(0.175,0)--cycle;} & block CMV Arnoldi
\end{tabular}};
\end{tikzpicture}
\vspace{0.4em}

\begin{minipage}{0.49\textwidth}
\centering
\begin{tikzpicture}
\begin{semilogyaxis}[
    width=\linewidth,
    height=0.72\linewidth,
    grid=both,
    xlabel={$m$},
    ylabel={$\varepsilon_{\mathrm{orth}}$},
    tick label style={font=\footnotesize},
    label style={font=\small},
]
\addplot+[blue, mark=o, thick] table[x=m,y=orth_arnoldi] {efficient_orthogonalization_partial.dat};
\addplot+[orange!80!black, mark=square, mark options={solid, fill=white}, thick] table[x=m,y=orth_isometric] {efficient_orthogonalization_partial.dat};
\addplot+[green!60!black, mark=triangle, mark options={solid, fill=white}, thick] table[x=m,y=orth_laurent_gs] {efficient_orthogonalization_partial.dat};
\addplot+[red!75!black, mark=diamond, mark options={solid, fill=white}, thick] table[x=m,y=orth_cmv] {efficient_orthogonalization_partial.dat};
\end{semilogyaxis}
\end{tikzpicture}
\end{minipage}
\hfill
\begin{minipage}{0.49\textwidth}
\centering
\begin{tikzpicture}
\begin{semilogyaxis}[
    width=\linewidth,
    height=0.72\linewidth,
    grid=both,
    xlabel={$m$},
    ylabel={$\varepsilon_{\mathrm{proj}}$},
    tick label style={font=\footnotesize},
    label style={font=\small},
]
\addplot+[blue, mark=o, thick] table[x=m,y=proj_arnoldi] {efficient_orthogonalization_partial.dat};
\addplot+[orange!80!black, mark=square, mark options={solid, fill=white}, thick] table[x=m,y=proj_isometric] {efficient_orthogonalization_partial.dat};
\addplot+[red!75!black, mark=diamond, mark options={solid, fill=white}, thick] table[x=m,y=proj_cmv] {efficient_orthogonalization_partial.dat};
\end{semilogyaxis}
\end{tikzpicture}
\end{minipage}
\caption{Loss of orthogonality and projection error for the computed bases.}
\label{fig:efficient-orthogonalization-accuracy}
\end{figure}

For block isometric Arnoldi, the projection error compares $\mathcal X_m^*A\mathcal X_m$ with the block Hessenberg matrix assembled from the Schur parameters. For block extended Arnoldi we do not plot a projection error: in this implementation the algorithm only constructs the basis, so no projected matrix is formed during the run. This is only a choice for the present experiment: since extended Krylov spaces are rational Krylov spaces, one can also build the usual rational Krylov recurrence pair and recover a projected matrix from it; see~\cite{Guttel13}.

The results are consistent with the operation counts. At step $m$, the full orthogonalization methods compare the new block with all previously generated blocks, so the orthogonalization work is $O(s^2mn)$ and the total cost grows quadratically with $m$. The short-recurrence methods use only a fixed number of previously computed blocks; apart from the matrix-block multiplication, their orthogonalization work is $O(s^2n)$ per step, giving a linear growth with $m$. This behavior is visible in~\cref{fig:efficient-orthogonalization-timing}. The accuracy curves in~\cref{fig:efficient-orthogonalization-accuracy} remain close to roundoff level. As expected, the full orthogonalization baselines give the smallest loss of orthogonality, and standard block Arnoldi gives the smallest projection error. The short-recurrence methods are slightly less accurate, but still stable for this unitary problem while being substantially faster.

\subsection{\texorpdfstring{Repeated eigenvalues}{Repeated eigenvalues}}

\begin{figure}[ht]
\centering
\begin{tikzpicture}
\begin{semilogyaxis}[
    width=0.9\textwidth,
    height=0.48\textwidth,
    grid=both,
    xlabel={$m$},
    ylabel={$d_j^{(s)}(m)$},
    ymin=1e-4,
    legend style={font=\scriptsize, at={(0.5,1.03)}, anchor=south, inner sep=1.5pt, row sep=-1pt},
    legend columns=4,
    legend image post style={scale=0.75},
    tick label style={font=\footnotesize},
    label style={font=\small},
    unbounded coords=jump,
]
\addplot+[blue, solid, mark=x, line width=0.55pt, mark size=2.8pt] table[x=m,y=s1_closest1] {repeated_eigenvalue_ritz_distances_isometric_complex.dat};
\addlegendentry{$s=1$, $j=1$}
\addplot+[orange!80!black, solid, mark=x, line width=0.55pt, mark size=2.8pt] table[x=m,y=s1_closest2] {repeated_eigenvalue_ritz_distances_isometric_complex.dat};
\addlegendentry{$s=1$, $j=2$}
\addplot+[green!60!black, solid, mark=x, line width=0.55pt, mark size=2.8pt] table[x=m,y=s1_closest3] {repeated_eigenvalue_ritz_distances_isometric_complex.dat};
\addlegendentry{$s=1$, $j=3$}
\addplot+[red!75!black, solid, mark=x, line width=0.55pt, mark size=2.8pt] table[x=m,y=s1_closest4] {repeated_eigenvalue_ritz_distances_isometric_complex.dat};
\addlegendentry{$s=1$, $j=4$}
\addplot+[blue, dashed, mark=triangle, mark options={solid, fill=white}, line width=0.55pt, mark size=2.7pt] table[x=m,y=s4_closest1] {repeated_eigenvalue_ritz_distances_isometric_complex.dat};
\addlegendentry{$s=4$, $j=1$}
\addplot+[orange!80!black, dashed, mark=triangle, mark options={solid, fill=white}, line width=0.55pt, mark size=2.7pt] table[x=m,y=s4_closest2] {repeated_eigenvalue_ritz_distances_isometric_complex.dat};
\addlegendentry{$s=4$, $j=2$}
\addplot+[green!60!black, dashed, mark=triangle, mark options={solid, fill=white}, line width=0.55pt, mark size=2.7pt] table[x=m,y=s4_closest3] {repeated_eigenvalue_ritz_distances_isometric_complex.dat};
\addlegendentry{$s=4$, $j=3$}
\addplot+[red!75!black, dashed, mark=triangle, mark options={solid, fill=white}, line width=0.55pt, mark size=2.7pt] table[x=m,y=s4_closest4] {repeated_eigenvalue_ritz_distances_isometric_complex.dat};
\addlegendentry{$s=4$, $j=4$}
\addplot+[blue, only marks, mark=x, line width=0.9pt, mark size=3.6pt, forget plot] table[x=m,y=s1_closest1] {repeated_eigenvalue_ritz_distances_isometric_complex.dat};
\end{semilogyaxis}
\end{tikzpicture}
\caption{Distances from the repeated eigenvalue $1$ of the four closest Ritz values produced by block isometric Arnoldi, for block sizes $s=1$ and $s=4$.}
\label{fig:repeated-eigenvalue-ritz-distances}
\end{figure}

We now consider a unitary eigenvalue problem with a multiple eigenvalue. It is known that block Krylov projections are better suited for this setting; see, e.g.,~\cite{BK-multiple-eigenvalues}. The matrix has size $n=800$ and is a randomly generated complex unitary matrix with prescribed eigenvalues. Four eigenvalues are equal to $1$, while the remaining eigenvalues lie on the unit circle outside the arc corresponding to arguments in $(-0.35,0.35)$, so that the repeated eigenvalue is separated from the rest of the spectrum. The starting block is chosen with non-negligible components in the eigenspace associated with the repeated eigenvalue.

For $s=1$ and $s=4$, we run block isometric Arnoldi for $m=1,\dots,20$ and compute the Ritz values of the projected matrix $\mathcal H_m$. If $\theta_1,\dots,\theta_{ms}$ are the Ritz values, let
\begin{equation}
\label{eq:numerical-repeated-ritz-distance}
    d_j^{(s)}(m)
    =
    \text{the $j$th smallest value in }
    \bigl\{|\theta_i-1|:i=1,\dots,ms\bigr\},
    \qquad j=1,\dots,\min\{4,ms\}.
\end{equation}
The quantities $d_j^{(s)}(m)$ measure whether the projected problem contains one, two, three, or four Ritz values close to the repeated eigenvalue. We display only the block isometric Arnoldi data: indeed, by~\cref{prop:polynomial-extended-projected-similarity}, the projected matrix associated with block CMV Arnoldi would produce the same Ritz values in exact arithmetic.

The results are shown in~\cref{fig:repeated-eigenvalue-ritz-distances}. The plot shows the expected difference between the single-vector and block cases. For $s=1$, only the closest Ritz value approaches the repeated eigenvalue substantially, while the next three distances remain large. For $s=4$, all four distances decrease together, showing that the projected problem captures the multiplicity of the eigenvalue.

\section{Conclusions} 
  We have shown that, under the assumption of no deflation, there is an isometric correspondence between a block Krylov subspace and the space of matrix polynomials up to a prescribed degree. This yields a unified framework for the analysis and construction of orthonormal bases and recurrence relations. The same correspondence carries over to rational block Krylov subspaces and matrix-valued rational functions, which reduce to Laurent matrix polynomials in the extended Krylov setting.

We also proved that, when $A$ is normal, the inner product induced on matrix polynomials can be represented as an integral with respect to a discrete spectral measure, thereby extending a classical result for Hermitian matrices~\cite{GoMe10,LundPhD}. In the unitary case, the support of the spectral measure lies on the unit circle, which allows one to directly translate the Szeg\H{o} recurrence for orthogonal matrix polynomials, as well as CMV bases and CMV matrices for Laurent matrix polynomials, into efficient procedures for the orthogonalization of polynomial and extended block Krylov subspaces.

A limitation of the present analysis is the assumption of no deflation. Treating the deflated case requires a deeper understanding of degenerate inner products for matrix polynomials and remains an important direction for future work. Another natural continuation of this work is to exploit the connection developed here in order to derive alternative approaches to block Gaussian quadrature, especially in the unitary setting.

\section*{Funding}
The research was partially supported by the Research Council KU Leuven (Belgium), project C16/21/002 (Manifactor: Factor Analysis for Maps into Manifolds) and by the Fund for Scientific Research -- Flanders (Belgium), projects G0A9923N (Low rank tensor approximation techniques for up- and downdating of massive online time series clustering) and G0B0123N (Short recurrence relations for rational Krylov and orthogonal rational functions inspired by modified moments), and junior postdoctoral fellowship 12A1325N (Short Recurrences for Block Krylov Methods with Applications to Matrix Functions and Model Order Reduction) for the first author.

\bibliography{references}

@article {FrLuSz17,
    AUTHOR = {Frommer, Andreas and Lund, Kathryn and Szyld, Daniel B.},
     TITLE = {Block {K}rylov subspace methods for functions of matrices},
   JOURNAL = {Electron. Trans. Numer. Anal.},
  FJOURNAL = {Electronic Transactions on Numerical Analysis},
    VOLUME = {47},
      YEAR = {2017},
     PAGES = {100--126},
      ISSN = {1068-9613},
   MRCLASS = {65F60 (65F50)},
  MRNUMBER = {3707736},
MRREVIEWER = {A.\ Bultheel},
       DOI = {10.1553/etna\_vol47s100},
       URL = {https://doi.org/10.1553/etna_vol47s100},
}

@article {FrLuSz20,
    AUTHOR = {Frommer, Andreas and Lund, Kathryn and Szyld, Daniel B.},
     TITLE = {Block {K}rylov subspace methods for functions of matrices
              {II}: {M}odified block {FOM}},
   JOURNAL = {SIAM J. Matrix Anal. Appl.},
  FJOURNAL = {SIAM Journal on Matrix Analysis and Applications},
    VOLUME = {41},
      YEAR = {2020},
    NUMBER = {2},
     PAGES = {804--837},
      ISSN = {0895-4798,1095-7162},
   MRCLASS = {65F60 (65F50)},
  MRNUMBER = {4101368},
MRREVIEWER = {Yuriy\ Nikolaevich\ Belyayev},
       DOI = {10.1137/19M1255847},
       URL = {https://doi.org/10.1137/19M1255847},
}

@article {SinVanAs94,
    AUTHOR = {Sinap, Ann and Van Assche, Walter},
     TITLE = {Orthogonal matrix polynomials and applications},
   JOURNAL = {J. Comput. Appl. Math.},
  FJOURNAL = {Journal of Computational and Applied Mathematics},
    VOLUME = {66},
      YEAR = {1996},
    NUMBER = {1-2},
     PAGES = {27--52},
      ISSN = {0377-0427,1879-1778},
   MRCLASS = {42C05 (65D30)},
  MRNUMBER = {1393717},
MRREVIEWER = {Olav\ Nj\aa stad},
       DOI = {10.1016/0377-0427(95)00193-X},
       URL = {https://doi.org/10.1016/0377-0427(95)00193-X},
}

@book {LieStraKrylov,
    AUTHOR = {Liesen, J\"org and Strako\v{s}, Zden\v{e}k},
     TITLE = {{K}rylov subspace methods},
    SERIES = {Numerical Mathematics and Scientific Computation},
 PUBLISHER = {Oxford University Press},
      YEAR = {2013},
     PAGES = {xvi+391},
      ISBN = {978-0-19-965541-0},
   MRCLASS = {65F10 (65F15)},
  MRNUMBER = {3024841},
MRREVIEWER = {Melina\ A.\ Freitag},
}

@article {ZimDruSim25,
    AUTHOR = {Zimmerling, J\"orn and Druskin, Vladimir and Simoncini,
              Valeria},
     TITLE = {Monotonicity, bounds and acceleration of block {G}auss and
              {G}auss-{R}adau quadrature for computing {$B^T\phi(A)B$}},
   JOURNAL = {J. Sci. Comput.},
  FJOURNAL = {Journal of Scientific Computing},
    VOLUME = {103},
      YEAR = {2025},
    NUMBER = {1},
     PAGES = {Paper No. 5, 21},
      ISSN = {0885-7474,1573-7691},
   MRCLASS = {65F50 (65F10 65F60 65N22)},
  MRNUMBER = {4863914},
       DOI = {10.1007/s10915-025-02799-z},
       URL = {https://doi.org/10.1007/s10915-025-02799-z},
}

@article {Guttel13,
    AUTHOR = {G\"uttel, Stefan},
     TITLE = {Rational {K}rylov approximation of matrix functions: numerical
              methods and optimal pole selection},
   JOURNAL = {GAMM-Mitt.},
  FJOURNAL = {GAMM-Mitteilungen},
    VOLUME = {36},
      YEAR = {2013},
    NUMBER = {1},
     PAGES = {8--31},
      ISSN = {0936-7195,1522-2608},
   MRCLASS = {65F60},
  MRNUMBER = {3095912},
MRREVIEWER = {Volker\ Karl Richard Grimm},
       DOI = {10.1002/gamm.201310002},
       URL = {https://doi.org/10.1002/gamm.201310002},
}

@book {GoMe10,
    AUTHOR = {Golub, Gene H. and Meurant, G\'erard},
     TITLE = {Matrices, moments and quadrature with applications},
    SERIES = {Princeton Series in Applied Mathematics},
 PUBLISHER = {Princeton University Press},
      YEAR = {2010},
     PAGES = {xii+363},
      ISBN = {978-0-691-14341-5},
   MRCLASS = {65-02 (65D30)},
  MRNUMBER = {2582949},
MRREVIEWER = {G.\ A.\ Evans},
}

@phdthesis{LundPhD,
  author       = {Kathryn Lund},
  title        = {A new block {K}rylov subspace framework with applications to functions of matrices acting on multiple vectors},
  school       = {Temple University and Bergische Universit{\"a}t Wuppertal},
  year         = {2018},
  address      = {Philadelphia, PA and Wuppertal, Germany},
  url          = {https://www.katlund.com/research/thesis},
}

@article {DaPuSi08,
    AUTHOR = {Damanik, David and Pushnitski, Alexander and Simon, Barry},
     TITLE = {The analytic theory of matrix orthogonal polynomials},
   JOURNAL = {Surv. Approx. Theory},
  FJOURNAL = {Surveys in Approximation Theory},
    VOLUME = {4},
      YEAR = {2008},
     PAGES = {1--85},
      ISSN = {1555-578X},
   MRCLASS = {42C05 (30C10 47B36)},
  MRNUMBER = {2379691},
}

@article {SimonCMV07,
    AUTHOR = {Simon, Barry},
     TITLE = {{CMV} matrices: five years after},
   JOURNAL = {J. Comput. Appl. Math.},
  FJOURNAL = {Journal of Computational and Applied Mathematics},
    VOLUME = {208},
      YEAR = {2007},
    NUMBER = {1},
     PAGES = {120--154},
      ISSN = {0377-0427,1879-1778},
   MRCLASS = {47B36 (15A52)},
  MRNUMBER = {2347741},
MRREVIEWER = {Petru\ A.\ Cojuhari},
       DOI = {10.1016/j.cam.2006.10.033},
       URL = {https://doi.org/10.1016/j.cam.2006.10.033},
}

@article {Watkins93,
    AUTHOR = {Watkins, David S.},
     TITLE = {Some perspectives on the eigenvalue problem},
   JOURNAL = {SIAM Rev.},
  FJOURNAL = {SIAM Review. A Publication of the Society for Industrial and
              Applied Mathematics},
    VOLUME = {35},
      YEAR = {1993},
    NUMBER = {3},
     PAGES = {430--471},
      ISSN = {1095-7200},
   MRCLASS = {65F15 (65D32 65F25)},
  MRNUMBER = {1234638},
MRREVIEWER = {E.\ Kreyszig},
       DOI = {10.1137/1035090},
       URL = {https://doi.org/10.1137/1035090},
}

@article {ElsGutt20,
    AUTHOR = {Elsworth, Steven and G\"uttel, Stefan},
     TITLE = {The block rational {A}rnoldi method},
   JOURNAL = {SIAM J. Matrix Anal. Appl.},
  FJOURNAL = {SIAM Journal on Matrix Analysis and Applications},
    VOLUME = {41},
      YEAR = {2020},
    NUMBER = {2},
     PAGES = {365--388},
      ISSN = {0895-4798,1095-7162},
   MRCLASS = {65F15 (65F25 65F50)},
  MRNUMBER = {4082482},
MRREVIEWER = {Jajati\ Keshari\ Sahoo},
       DOI = {10.1137/19M1245505},
       URL = {https://doi.org/10.1137/19M1245505},
}

@article {Gragg93isometric,
    AUTHOR = {Gragg, William B.},
     TITLE = {Positive definite {T}oeplitz matrices, the {A}rnoldi process
              for isometric operators, and {G}aussian quadrature on the unit
              circle},
   JOURNAL = {J. Comput. Appl. Math.},
  FJOURNAL = {Journal of Computational and Applied Mathematics},
    VOLUME = {46},
      YEAR = {1993},
    NUMBER = {1-2},
     PAGES = {183--198},
      ISSN = {0377-0427,1879-1778},
   MRCLASS = {65F15 (62M20 65D32)},
  MRNUMBER = {1222480},
       DOI = {10.1016/0377-0427(93)90294-L},
       URL = {https://doi.org/10.1016/0377-0427(93)90294-L},
}

@book {Simon-book05,
    AUTHOR = {Simon, Barry},
     TITLE = {Orthogonal polynomials on the unit circle. {P}art 1},
    SERIES = {American Mathematical Society Colloquium Publications},
    VOLUME = {54, Part 1},
 PUBLISHER = {American Mathematical Society},
      YEAR = {2005},
     PAGES = {xxvi+466},
      ISBN = {0-8218-3446-0},
   MRCLASS = {42-02 (30C85 33C45 42C05 47B36 47N50)},
  MRNUMBER = {2105088},
MRREVIEWER = {P.\ L.\ Duren},
       DOI = {10.1090/coll054.1},
       URL = {https://doi.org/10.1090/coll054.1},
}

@article {CarsonLundetal22,
    AUTHOR = {Carson, Erin and Lund, Kathryn and Rozlo\v{z}n\'ik, Miroslav
              and Thomas, Stephen},
     TITLE = {Block {G}ram-{S}chmidt algorithms and their stability
              properties},
   JOURNAL = {Linear Algebra Appl.},
  FJOURNAL = {Linear Algebra and its Applications},
    VOLUME = {638},
      YEAR = {2022},
     PAGES = {150--195},
      ISSN = {0024-3795,1873-1856},
   MRCLASS = {65F25 (15A23 65F05 65F10)},
  MRNUMBER = {4360388},
MRREVIEWER = {Changqing\ Xu},
       DOI = {10.1016/j.laa.2021.12.017},
       URL = {https://doi.org/10.1016/j.laa.2021.12.017},
}

@article {MeurantLanczos06,
    AUTHOR = {Meurant, G\'erard and Strako\v{s}, Zden\v{e}k},
     TITLE = {The {L}anczos and conjugate gradient algorithms in finite
              precision arithmetic},
   JOURNAL = {Acta Numer.},
  FJOURNAL = {Acta Numerica},
    VOLUME = {15},
      YEAR = {2006},
     PAGES = {471--542},
      ISSN = {0962-4929,1474-0508},
      ISBN = {0-521-86815-7},
   MRCLASS = {65F15 (65F10 65G50)},
  MRNUMBER = {2269746},
MRREVIEWER = {A.\ Bultheel},
       DOI = {10.1017/S096249290626001X},
       URL = {https://doi.org/10.1017/S096249290626001X},
}

@book {SzegoBook75,
    AUTHOR = {Szeg\H{o}, G\'abor},
     TITLE = {Orthogonal polynomials},
    SERIES = {American Mathematical Society Colloquium Publications},
    VOLUME = {23},
   EDITION = {Fourth},
 PUBLISHER = {American Mathematical Society},
      YEAR = {1975},
     PAGES = {xiii+432},
   MRCLASS = {42A52 (33A65)},
  MRNUMBER = {372517},
}

@article {Simoncini95,
    AUTHOR = {Simoncini, V. and Gallopoulos, E.},
     TITLE = {An iterative method for nonsymmetric systems with multiple
              right-hand sides},
   JOURNAL = {SIAM J. Sci. Comput.},
  FJOURNAL = {SIAM Journal on Scientific Computing},
    VOLUME = {16},
      YEAR = {1995},
    NUMBER = {4},
     PAGES = {917--933},
      ISSN = {1064-8275},
   MRCLASS = {65F10 (15A06 65Y20)},
  MRNUMBER = {1335897},
MRREVIEWER = {R.\ E.\ Funderlic},
       DOI = {10.1137/0916053},
       URL = {https://doi.org/10.1137/0916053},
}

@article {OLeary80,
    AUTHOR = {O'Leary, Dianne P.},
     TITLE = {The block conjugate gradient algorithm and related methods},
   JOURNAL = {Linear Algebra Appl.},
  FJOURNAL = {Linear Algebra and its Applications},
    VOLUME = {29},
      YEAR = {1980},
     PAGES = {293--322},
      ISSN = {0024-3795,1873-1856},
   MRCLASS = {65F10 (90C20)},
  MRNUMBER = {562766},
MRREVIEWER = {Jan\ Z\'itko},
       DOI = {10.1016/0024-3795(80)90247-5},
       URL = {https://doi.org/10.1016/0024-3795(80)90247-5},
}

@article {blockKrylovSylvester00,
    AUTHOR = {El Guennouni, A. and Jbilou, K. and Riquet, A. J.},
     TITLE = {Block {K}rylov subspace methods for solving large {S}ylvester
              equations},
   JOURNAL = {Numer. Algorithms},
  FJOURNAL = {Numerical Algorithms},
    VOLUME = {29},
      YEAR = {2002},
    NUMBER = {1-3},
     PAGES = {75--96},
      ISSN = {1017-1398,1572-9265},
   MRCLASS = {65F10},
  MRNUMBER = {1896947},
MRREVIEWER = {Jian-Ping\ Zhu},
       DOI = {10.1023/A:1014807923223},
       URL = {https://doi.org/10.1023/A:1014807923223},
}

@book {Antoulas05,
    AUTHOR = {Antoulas, Athanasios C.},
     TITLE = {Approximation of large-scale dynamical systems},
    SERIES = {Advances in Design and Control},
    VOLUME = {6},
 PUBLISHER = {Society for Industrial and Applied Mathematics},
      YEAR = {2005},
     PAGES = {xxvi+479},
      ISBN = {0-89871-529-6},
   MRCLASS = {93-02 (65P99 93B25 93B40 93C05)},
  MRNUMBER = {2155615},
MRREVIEWER = {Petko\ Petkov},
       DOI = {10.1137/1.9780898718713},
       URL = {https://doi.org/10.1137/1.9780898718713},
}

@book {Parlett98,
    AUTHOR = {Parlett, Beresford N.},
     TITLE = {The symmetric eigenvalue problem},
    SERIES = {Classics in Applied Mathematics},
    VOLUME = {20},
 PUBLISHER = {Society for Industrial and Applied Mathematics},
      YEAR = {1998},
     PAGES = {xxiv+398},
      ISBN = {0-89871-402-8},
   MRCLASS = {65F15 (15A18)},
  MRNUMBER = {1490034},
MRREVIEWER = {F.\ Szidarovszky},
       DOI = {10.1137/1.9781611971163},
       URL = {https://doi.org/10.1137/1.9781611971163},
}

@article {BlockGradeGutknecht09,
    AUTHOR = {Gutknecht, Martin H. and Schmelzer, Thomas},
     TITLE = {The block grade of a block {K}rylov space},
   JOURNAL = {Linear Algebra Appl.},
  FJOURNAL = {Linear Algebra and its Applications},
    VOLUME = {430},
      YEAR = {2009},
    NUMBER = {1},
     PAGES = {174--185},
      ISSN = {0024-3795,1873-1856},
   MRCLASS = {65F10},
  MRNUMBER = {2460508},
MRREVIEWER = {Valeria\ Simoncini},
       DOI = {10.1016/j.laa.2008.07.008},
       URL = {https://doi.org/10.1016/j.laa.2008.07.008},
}

@article {HelKuiVanba05,
    AUTHOR = {Helsen, S. and Kuijlaars, A. B. J. and Van Barel, M.},
     TITLE = {Convergence of the isometric {A}rnoldi process},
   JOURNAL = {SIAM J. Matrix Anal. Appl.},
  FJOURNAL = {SIAM Journal on Matrix Analysis and Applications},
    VOLUME = {26},
      YEAR = {2005},
    NUMBER = {3},
     PAGES = {782--809},
      ISSN = {0895-4798,1095-7162},
   MRCLASS = {65F15 (31A05 31A15)},
  MRNUMBER = {2137484},
       DOI = {10.1137/S0895479803438201},
       URL = {https://doi.org/10.1137/S0895479803438201},
}

@article {EmbreeFloquet24,
    AUTHOR = {Damanik, David and Embree, Mark and Fillman, Jake},
     TITLE = {Gap labels for zeros of the partition function of the 1{D}
              {I}sing model via the {S}chwartzman homomorphism},
   JOURNAL = {Indag. Math. (N.S.)},
  FJOURNAL = {Koninklijke Nederlandse Akademie van Wetenschappen.
              Indagationes Mathematicae. New Series},
    VOLUME = {35},
      YEAR = {2024},
    NUMBER = {5},
     PAGES = {813--836},
      ISSN = {0019-3577,1872-6100},
   MRCLASS = {82B44 (15A18 37A25 47B36)},
  MRNUMBER = {4793392},
MRREVIEWER = {Qiang\ Wu},
       DOI = {10.1016/j.indag.2023.05.004},
       URL = {https://doi.org/10.1016/j.indag.2023.05.004},
}

@article {BK-multiple-eigenvalues,
    AUTHOR = {Sadkane, Miloud},
     TITLE = {Block-{A}rnoldi and {D}avidson methods for unsymmetric large
              eigenvalue problems},
   JOURNAL = {Numer. Math.},
  FJOURNAL = {Numerische Mathematik},
    VOLUME = {64},
      YEAR = {1993},
    NUMBER = {2},
     PAGES = {195--211},
      ISSN = {0029-599X,0945-3245},
   MRCLASS = {65F50 (65F15)},
  MRNUMBER = {1199285},
       DOI = {10.1007/BF01388687},
       URL = {https://doi.org/10.1007/BF01388687},
}

@article {extended-block-arnoldi,
    AUTHOR = {Druskin, Vladimir and Knizhnerman, Leonid},
     TITLE = {Extended {K}rylov subspaces: approximation of the matrix
              square root and related functions},
   JOURNAL = {SIAM J. Matrix Anal. Appl.},
  FJOURNAL = {SIAM Journal on Matrix Analysis and Applications},
    VOLUME = {19},
      YEAR = {1998},
    NUMBER = {3},
     PAGES = {755--771},
      ISSN = {0895-4798,1095-7162},
   MRCLASS = {65F25 (65N99)},
  MRNUMBER = {1616584},
MRREVIEWER = {Larisa\ V.\ Maslovskaya},
       DOI = {10.1137/S0895479895292400},
       URL = {https://doi.org/10.1137/S0895479895292400},
}
\bibliographystyle{abbrv}

\end{document}